\theoremstyle{plain}
\newtheorem{thm}{Theorem}[section]
\newtheorem{corol}[thm]{Corollary}
\newtheorem{lem}[thm]{Lemma}
\newtheorem{claim}[thm]{Claim}
\newtheorem{problem}[thm]{Problem}
\theoremstyle{definition}
\newtheorem{defn}[thm]{Definition}
\newtheorem{conj}[thm]{Conjecture}
\theoremstyle{remark}
\newtheorem*{rem}{Remark}
\newtheorem*{note}{Note}
\newtheorem*{notation}{Notational notes for this section}
\newtheorem{obs}{Observation}
\newtheorem*{fct}{Fact}
\newcommand{\com}[1]{}
\title{Weakening Total Coloring Conjecture: Weak TCC and Hadwiger's Conjecture on  Total Graphs}
\date{\vspace{-5ex}}
\author[1]{Manu Basavaraju}
\affil[1]{Department of Computer Science and Engineering,\protect\\National Institute of Technology Karnataka, Surathkal - 575025, India \protect\\\texttt{manub@nitk.ac.in}}
\author[2]{L. Sunil Chandran}
\affil[2]{Department of Computer Science and Automation,\protect\\Indian Institute of Science, Bangalore - 560012, India  \protect\\\texttt{\{sunil,ankurnaskar\}@iisc.ac.in}}
\author[3]{Mathew C. Francis}
\affil[3]{Computer Science Unit\\Indian Statistical Institute, Chennai Centre,\protect\\Chennai - 600029, India\protect\\\texttt{mathew@isichennai.res.in}}
\author[2]{Ankur Naskar}
\begin{document}

\maketitle

\begin{abstract}

The total graph of a graph $G$, denoted by $T(G)$, is defined on the vertex
set $V(G)\sqcup E(G)$ with $c_1,c_2 \in V(G)\sqcup E(G)$ adjacent whenever $c_1$ and $c_2$ are
adjacent to (or incident on) each other in $G$. The total chromatic number
$\chi''(G)$ of a graph $G$ is defined to be the chromatic number of its total
graph. The well-known Total Coloring Conjecture
or TCC states that for every simple finite graph $G$ having maximum degree
$\Delta(G)$, $\chi''(G)\leq \Delta(G) + 2$. In this paper, we consider two ways to weaken TCC:
\smallskip

\noindent 1. {\itshape Weak TCC:} This conjecture states that for a simple finite graph $G$, $\chi''(G) = \chi(T(G)) \leq\Delta(G) + 3$. While weak TCC is known to be true for 4-colorable graphs, it has remained open for 5-colorable graphs. In this paper, we settle this long pending case.
\smallskip

\noindent 2. {\itshape Hadwiger's Conjecture for total graphs:} We can restate TCC as a conjecture that proposes the existence of a strong $\chi$-bounding function for the class of total graphs in the following way: If $H$ is the total
graph of a simple finite graph, then $\chi(H) \leq\omega(H) + 1$, where $\omega(H)$ is the
clique number of $H$. A natural way to relax this question is to replace
$\omega(H)$ by the Hadwiger number $\eta(H)$, the number of vertices in the largest
clique minor of $H$. This leads to the Hadwiger's Conjecture (HC) for total
graphs: if $H$ is a total graph then $\chi(H) \leq \eta(H)$. We prove that this is true if $H$ is the total graph of a graph with sufficiently large connectivity.

A second  motivation for studying Hadwiger's conjecture for total graphs is the following: Consider the class of split graphs whose vertex set is partitioned into an 
	independent  set $A$ and a  clique $B$, with the  following additional 
	constraints:  (1) Each vertex in $B$ has exactly 2 neighbours in $A$;
	(2) No two vertices in $B$ have the same neighbourhood in $A$. 
	It is known that if Hadwiger's conjecture is proved for the squares of this special  class of split graphs, then it  holds also for the general case.
        Of course, proving the conjecture  for this special case is indeed difficult, and therefore
	it is natural to consider the difficulty level of  Hadwiger's conjecture
for the squares of  graph classes defined by slighly modifying the above class of graphs. 
	A  natural structural  modification is to assume that  {\it both}  $A$ and $B$ are independent sets, keeping everything
	else same. 
	{\it It turns out that  the squares of this modified class of graphs
	is exactly the class of
	total graphs. }   From this perspective, it is not really surprising
	that 
	HC on Total Graphs is also  challenging. 
On the other hand, we   show that weak TCC implies HC on total graphs.
	This perhaps 
	suggests that the
latter is an easier problem than the former.

\end{abstract}

\begin{keywords}
graph minors, Hadwiger's conjecture, total coloring, total chromatic number, total coloring conjecture, total graphs, squares of graphs
\end{keywords}

\section{Introduction}

\subsection {Total Coloring Conjecture} 

Let $G$ be a simple finite graph. For a vertex $v\in V(G)$ in $G$, we define $N_{G}(v):=\{ u\in V(G) : u$ is adjacent to  $v$ \text{ in } $G\}$ and $E_{G}(v):=\{ e\in E(G): e $  is incident on $v$ in
$G$ \}. The degree of a vertex $v \in V(G)$ is defined as $d(v) = |N_G(v)| = |E_G(v)|$.  The maximum degree of $G$, $\Delta(G) = \max_{v \in V(G)} d(v)$. 

A vertex coloring is called a \textit {proper vertex-coloring} if no two adjacent vertices are assigned the same color. The minimum number   of colors required to achieve a proper vertex-coloring of
$G$ is called the chromatic number of $G$, and is denoted by $\chi(G)$. 
Like the vertex-coloring problem, the problem of coloring the edges of a graph is also one that has received much attention in the literature. A coloring of the edges of a graph is called a \textit{proper edge-coloring} if no two adjacent edges are  assigned the same color. The minimum number of colors required in any proper edge-coloring of a graph $G$ is called its \textit{edge-chromatic number} or \textit{chromatic index}, and is denoted by $\chi'(G)$.
The following theorem was proved by Vizing :
\begin{thm} [Vizing's Theorem] \label{viz}
For a simple finite  graph $G$, $\Delta(G)\leq \chi'(G)\leq \Delta(G)+1$.
\end{thm}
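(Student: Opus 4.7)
The lower bound $\Delta(G) \leq \chi'(G)$ is immediate: the $\Delta(G)$ edges incident to a vertex of maximum degree are pairwise adjacent and therefore must receive pairwise distinct colors in any proper edge-coloring. The substance of the theorem is the upper bound $\chi'(G) \leq \Delta(G) + 1$, and I would prove it by induction on $|E(G)|$, the base case being trivial. For the inductive step, write $\Delta = \Delta(G)$, remove an arbitrary edge $uv_0$ from $G$, and apply the induction hypothesis to obtain a proper edge-coloring of $G - uv_0$ using the palette $\{1, 2, \ldots, \Delta + 1\}$. Since every vertex $w$ has degree at most $\Delta$ but $\Delta + 1$ colors are available, at least one color is \emph{missing} at $w$ in the sense that no edge incident to $w$ carries it. The aim is to modify the coloring so that some color becomes simultaneously missing at both endpoints of $uv_0$, allowing $uv_0$ to be colored to complete the extension.

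The key construction is a \emph{Vizing fan} at $u$: starting from $v_0$, greedily build a maximal sequence $v_0, v_1, v_2, \ldots, v_k$ of distinct neighbours of $u$ with the property that for each $i \geq 1$, the edge $uv_i$ has a color $\alpha_i$ that is missing at $v_{i-1}$. Maximality forces a useful structural dichotomy, which I would analyse by cases. If some color missing at $u$ is also missing at some $v_i$ then, by repeatedly shifting the colors $\alpha_1, \alpha_2, \ldots, \alpha_i$ down along the fan edges $uv_1, uv_2, \ldots, uv_i$, we free up a color at $v_0$ that is also missing at $u$, and the extension succeeds.

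The genuinely hard case, and the main obstacle, is when no such common missing color exists at any fan vertex. Here one picks a color $\alpha$ missing at $u$ and a color $\beta$ missing at $v_k$, and considers the maximal $(\alpha,\beta)$-alternating path $P$ beginning at $v_k$, known as a Kempe chain. Since at most one of $\alpha,\beta$ is missing at any given vertex, $P$ is uniquely determined. Swapping the colors $\alpha$ and $\beta$ along $P$ preserves properness, so the task reduces to showing that whichever of the three possible endpoints $P$ may reach (namely $u$, or some $v_i$ with $i < k$, or a vertex outside the fan), one can always truncate the fan to an appropriate index, perform the Kempe swap, then shift fan colors as above, and finally color $uv_0$. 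The delicate bookkeeping is to verify that after the swap, the modified fan still satisfies the missing-color conditions at the truncated index; this is where the maximality of the original fan, together with the chosen position at which the Kempe chain terminates, is used to rule out obstructions. Once each of these three sub-cases is handled, the induction closes and the bound $\chi'(G) \leq \Delta(G) + 1$ follows.
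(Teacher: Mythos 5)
The paper does not prove this statement: Vizing's theorem is quoted as a classical result and used as a black box throughout (for instance in the argument that weak TCC holds for $4$-colorable graphs, and in Theorem~\ref{th4} where $E(G)$ is initially colored with $\Delta+1$ colors). So there is no ``paper's own proof'' to compare against. Your sketch follows the standard Vizing-fan / Kempe-chain route, which is indeed the canonical proof, and the overall skeleton is right. Two remarks on accuracy, though.

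First, a small imprecision in the easy case: you say you ``free up a color at $v_0$.'' What actually happens in the fan rotation is that you recolor $uv_0 \leftarrow \alpha_1$, $uv_1 \leftarrow \alpha_2$, \dots, $uv_{i-1}\leftarrow \alpha_i$, thereby uncoloring $uv_i$, and then color $uv_i$ with the color missing at both $u$ and $v_i$. No color is ``freed'' at $v_0$; rather, properness is preserved at each $v_j$ because the color shifted onto $uv_j$ was missing at $v_j$ to begin with.

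Second, and more substantively, the hard case is not as open-ended as ``three possible endpoints.'' The load-bearing observation, which your sketch omits, is this: if no color missing at $u$ is missing at any $v_i$, then the color $\beta$ missing at $v_k$ must appear on some edge $uv_j$ of the fan (i.e.\ $\beta=\alpha_j$ for some $j\le k$), for otherwise $\beta$ would either be missing at $u$ (contradiction) or would let you extend the fan past $v_k$ (contradicting maximality). With that $j$ pinned down, you examine the $\alpha/\beta$-Kempe chain from $v_{j-1}$ and from $v_k$; since $\alpha$ is missing at $u$ and $\beta$ is not, $u$ has a unique $\beta$-edge, namely $uv_j$, so the chain through $u$ is a path and cannot reach both $v_{j-1}$ and $v_k$. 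Whichever it misses, you swap along that component, and either the truncated fan $v_0,\dots,v_{j-1}$ or the full fan $v_0,\dots,v_k$ now has a color missing at both ends, reducing to the easy case. Without identifying the index $j$ and the fact that $u$ has exactly one $\beta$-edge, the ``bookkeeping'' you defer is not merely delicate --- it is the entire argument, and the sketch as written does not establish that one of the subcases must actually close.
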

Vizing's theorem suggests a classification of  simple finite graphs into two classes. A graph $G$ is said to be in  \textit{class I} if $\chi'(G)=\Delta(G)$ and in \textit{class II} if $\chi'(G)=\Delta(G)+1$. 
\medskip

The notion of line graphs allows us to view the edge coloring problem as a vertex coloring problem. Given a graph $G$, the line graph of $G$ is the graph   $L(G)= (V',E')$ where $V'= E(G)$ 
and  $e_1, e_2 \in V'$ form an edge $e_1e_2 \in E'$ whenever $e_1$and $e_2$ are adjacent edges in $G$.  Clearly $\chi'(G) = \chi(L(G))$.  The clique number of $G$, denoted by $\omega(G)$,
is the number of vertices
in a maximum clique of $G$. Clearly $\omega(G) \le \chi(G)$ for all graphs. It is easy to see that $\omega(L(G)) \ge \Delta(G)$, and therefore Vizing's theorem guarantees that the chromatic number of
any line graph  is at most $1$ more than its clique number.  Given a class of graphs ${\cal F}$, if there exists a function $f$ such that $\forall G \in {\cal F},  \chi(G) \le f(\omega(G))$, then
${\cal F}$ is a \emph {$\chi$-bounded class}  and $f$ is a  \emph {$\chi$-bounding function} for ${\cal F}$.  $\chi$-boundedness of graph classes is an extensively studied topic, see the survey by Scott and
Seymour \cite {scottseym} for further
references. Thus Vizing's Theorem guarantees a very strong $\chi$-bounding function for the class of line graphs.

After  vertex and edge colorings it was natural for graph theorists to  consider coloring vertices and edges simultaneously. 
A \textit{total coloring} of a graph $G$ is a coloring of all its \textit{elements}, that is, vertices and edges, such that no two adjacent  
elements are assigned the same color. 
(When we refer to two adjacent \textit {elements} of a graph, they are either adjacent to each other or incident on each other as the case may be.) 
That is, it is an assignment $c$ of colors to $V(G)\cup E(G)$ such that $c|_{V(G)}$ is a proper vertex-coloring, $c|_{E(G)}$ is a proper edge-coloring, and $c(uv)\notin\{c(u),c(v)\}$ for any edge $uv\in E(G)$. The minimum possible number of colors in any total coloring of $G$ is called the \textit{total chromatic number} of $G$, and is denoted by $\chi''(G)$.

The \textit{total coloring conjecture} (or \textit{TCC}) was proposed by Behzad \cite{behzad} and Vizing \cite{vizing} independently between 1964 and 1968.
\begin{conj}[Total coloring conjecture]\label{tcc} For any simple, finite  graph $G$,
    \[ \chi''(G) \leq \Delta(G) + 2  \]
\end{conj}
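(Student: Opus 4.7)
The statement in question is the Total Coloring Conjecture itself, which is a famous open problem rather than a theorem the authors settle (the paper proposes and proves \emph{weakenings}). So my ``proof plan'' is really a sketch of the most natural line of attack and an honest identification of why it fails at full strength.

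The plan is to combine Vizing's theorem (Theorem~\ref{viz}) with a vertex recoloring argument in order to extend an optimal edge coloring to a total coloring with only one more color. Concretely, I would first fix a palette $C$ of size $\Delta(G)+2$ and apply Vizing's theorem to properly color $E(G)$ using $\Delta(G)+1$ colors drawn from $C$. It then remains to assign, to each $v\in V(G)$, a color from $C$ that avoids both $c(E_G(v))$ and $c(N_G(v))$, subject to the constraint that the vertex assignment itself is a proper coloring of $G$. Greedy assignment works at any vertex $v$ whose ``forbidden set'' $c(E_G(v))\cup c(N_G(v))$ has size less than $\Delta(G)+2$; the real task is to reshuffle the edge coloring so that this inequality holds at \emph{every} vertex simultaneously.

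The heart of the approach would be a simultaneous induction on $|V(G)|+|E(G)|$. Remove a carefully chosen element (a minimum-degree vertex, or an edge incident to one), apply the inductive hypothesis to obtain a total coloring $\phi$ of the smaller graph, and then try to reinsert the removed element. If no color is immediately available, I would use Kempe-chain style exchanges on the bichromatic subgraphs of $\phi$ to free a color; the key lemma to prove along the way would state that, given the $\Delta(G)+2$-color budget, at least one such chain of bounded length can always be flipped without creating a new conflict. Vizing's original adjacency lemma, and its numerous strengthenings for edge coloring, are the template one would try to imitate here.

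The main obstacle, and the reason TCC has remained open since the 1960s, lies exactly where edge and vertex constraints interact. A vertex $v$ of degree $\Delta(G)$ already sees $\Delta(G)$ distinct edge colors on $E_G(v)$, leaving only $2$ slack colors in the palette of size $\Delta(G)+2$; any coincidence with a neighbor's vertex color immediately destroys the slack, and the Kempe-chain moves that would repair the situation in pure edge coloring typically disturb vertex colors (and vice versa). I would expect a careful execution of the plan above to recover, at best, partial results such as the Weak~TCC for graphs of small chromatic number, which is precisely the regime the present paper extends; pushing through to the full bound $\Delta(G)+2$ would require a genuinely new idea that breaks the symmetry between the two types of coloring constraints.
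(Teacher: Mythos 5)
You correctly recognize that the statement singled out is Conjecture~\ref{tcc} itself, which the paper presents as an open conjecture and does not prove; there is no ``paper's own proof'' to compare against. Your analysis — that Vizing's theorem leaves too little slack at a maximum-degree vertex once a proper $(\Delta+1)$-edge-coloring is fixed, and that Kempe-chain repairs entangle edge and vertex constraints — accurately describes why the naive extension strategy stalls at $\Delta+3$ (which is exactly weak TCC, the regime the paper works in) rather than $\Delta+2$. So your assessment is sound; just be precise in your write-up that one is being asked to appraise a conjecture rather than reproduce a proof, and that the honest conclusion is ``no proof is known,'' not ``here is a gap in my argument.''
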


\begin{rem}
	If we use multigraphs instead of simple graphs, the above statement may not hold.  This is true of Theorem \ref {viz} also. In this paper we consider only simple finite graphs.
\end{rem}

Even though many researchers have examined \textit{TCC} over the years, it remains unsolved till date and is considered one of the hardest open problems in graph coloring. Bollob\'as and Harris \cite{bh} proved that $\chi''(G)\leq \frac{11}{6}\Delta(G)$, when $\Delta(G)$ is sufficiently large. Later Kostochka \cite{kostochka3} proved that $\chi''(G)\leq \frac{3}{2}\Delta(G)$, when $\Delta(G)\geq 6$. The best known result to date for the general case was obtained by Molloy and Reed \cite{molloyreed} :
\begin{thm}\label{molloy}
$\chi''(G)\leq \Delta(G)+C$, where $C$ is equal to $10^{26}$.
\end{thm}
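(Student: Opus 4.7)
The plan is to attack this bound through the probabilistic method, specifically the semi-random (``nibble'') technique combined with repeated applications of the Lov\'asz Local Lemma. First I would reformulate the total coloring problem as a vertex-coloring problem on the total graph $T(G)$, which has maximum degree at most $2\Delta(G)$. The goal is then to show that $T(G)$ can be vertex-colored using a palette of only $\Delta(G)+C$ colors, which demands exploiting the specific structure of $T(G)$ rather than its raw maximum degree: each element has at most $\Delta(G)$ vertex-type neighbors and at most roughly $\Delta(G)$ edge-type neighbors, with significant overlap in their color constraints.

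The key technical steps would be: (i) perform a single round of random partial coloring in which each element (vertex or edge of $G$) independently proposes a color chosen uniformly from its current list, and retains its proposal if no conflict arises in its $T(G)$-neighborhood; (ii) use the Local Lemma, together with concentration via Talagrand's or Azuma's inequality, to argue that with positive probability the number of uncolored $T(G)$-neighbors of any element and the number of colors still available in its list both shrink by comparable multiplicative factors; (iii) iterate this nibble step roughly $O(\log \Delta)$ times so that at the end of the process, each uncolored element has substantially more available colors than uncolored neighbors; (iv) finish by a greedy or list-coloring argument on the sparse residual structure, where each element now has a comfortable surplus of colors over constraints.

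The main obstacle I expect is the strong coupling between vertex colors and edge colors. In total coloring, a color used on an edge $uv$ simultaneously forbids that color at both endpoints and at every other edge incident to $u$ or $v$, so the bad events in any single random round are much more correlated than in pure edge coloring. As a consequence, the dependency structure underlying the Local Lemma is denser, and the concentration estimates controlling list-size shrinkage must be carried out separately for vertex-type and edge-type neighbors, with care taken so that losses do not compound across rounds. A second difficulty is that one cannot afford to pay even a multiplicative $(1+\varepsilon)\Delta(G)$ overhead, so every slack produced by the Local Lemma must be absorbed into the additive constant. Obtaining the explicit value $C = 10^{26}$ is then a matter of careful but essentially routine bookkeeping of the constants at each application of the Local Lemma and each concentration inequality, once the qualitative semi-random scheme is in place.
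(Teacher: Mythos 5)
This statement is not proved in the paper at all: it is Molloy and Reed's 1998 theorem, cited from the literature (\cite{molloyreed}), and the paper simply invokes it as a black box. So there is no ``paper's own proof'' to compare against, and your task here was really to reconstruct the Molloy--Reed argument.

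Your outline captures the correct high-level ingredients of that argument---the semi-random ``nibble'' method, the Lov\'asz Local Lemma, concentration via Talagrand's inequality, and a final clean-up step---so you have the right family of tools in mind. However, as written there is a genuine gap at the heart of the plan. You propose running a generic vertex-coloring nibble on $T(G)$, a graph of maximum degree roughly $2\Delta(G)$, aiming for $\Delta(G)+C$ colors. No nibble argument that only uses the degree of $T(G)$ can work: a graph of maximum degree $2\Delta$ may genuinely require close to $2\Delta$ colors, and $T(G)$ even contains $(\Delta+1)$-cliques, so the slack between the target palette and the clique number is a single additive constant. You acknowledge that the structure of $T(G)$ must be exploited, but you do not say what structure, and that is precisely where all the work lies. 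What Molloy and Reed actually do is quite different in organization: they begin from a proper $(\Delta+1)$-edge-coloring of $G$ and then show, via a carefully controlled random modification, that one can also color the vertices and repair the resulting conflicts with only a bounded number of extra colors, by ensuring that the pattern of ``forbidden'' colors seen at each vertex is well-spread rather than adversarial. The randomness and the Local Lemma are used to control this structural quantity (not the raw number of uncolored neighbors), and Talagrand's inequality is needed because the relevant random variables are not simple sums. Finally, describing the extraction of the explicit constant $10^{26}$ as ``essentially routine bookkeeping'' substantially understates matters: the Molloy--Reed proof is roughly forty pages long, and even they remark that pushing the constant down to about $500$ requires considerable additional effort.
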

\begin{rem}
	Molloy and Reed have mentioned in \cite{molloyreed} that though they only show a value $10^{26}$ for the constant $C$, with much more effort it can be brought down to $500$.
\end{rem}
While \textit{TCC} remains unsolved for the general case, it is known to hold for some special classes of graphs. For example, it is easy to see that \textit{TCC} is true for all complete graphs and bipartite graphs \cite{bcc}. Another case is the class of graphs with maximum degree at most $5$. 
\begin{thm}[{\cite{rosenfeld}, \cite{vijay}, \cite{kostochka2}, \cite{kostochka}}]\label{tcd}
\textit{TCC} holds for all graphs $G$ with maximum degree at most $5$.
\end{thm}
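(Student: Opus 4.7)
The plan is to induct on $|V(G)|+|E(G)|$, handling the easy cases directly and then attacking $\Delta(G)\in\{4,5\}$ by a minimum-counterexample/reducible-configurations argument.

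For $\Delta(G)\leq 3$, I would rely on separate arguments. The cases $\Delta\leq 2$ are immediate because $G$ is then a disjoint union of paths and cycles, each of which can be totally colored with at most $4$ colors directly. For $\Delta(G)=3$, I would combine Vizing's theorem (Theorem~\ref{viz}) with a case analysis on how the edges meet at each vertex, extending a $4$-edge-coloring to a total coloring with $5$ colors by recoloring along Kempe chains whenever a vertex has no free color remaining in its palette. A handful of small cubic graphs have to be checked by hand.

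For $\Delta(G)\in\{4,5\}$, take $G$ to be a counterexample of minimum $|V(G)|+|E(G)|$. Standard minimality arguments (delete an edge or a low-degree vertex, apply induction, re-extend the coloring) quickly force $G$ to be $\Delta$-regular, $2$-edge-connected, and to avoid various small local configurations such as two adjacent vertices both lying in short cycles of a suitable type. The main strategy is then to exhibit an \emph{unavoidable reducible configuration}: a finite list of local subgraphs such that (i) every $\Delta$-regular graph contains at least one of them, and (ii) the presence of any one leads to a contradiction with the minimality of $G$. The unavoidability part is argued by a counting or discharging scheme that measures local edge density around degree-$\Delta$ vertices; the reducibility part is argued by Kempe-chain recolorings on carefully chosen pairs of colors.

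The main obstacle is the reducibility step for $\Delta=5$. For each configuration $H$ in the list, one deletes a small portion of $G$ around $H$, applies the inductive hypothesis to obtain a total coloring of the reduced graph with $\Delta+2=7$ colors, and then must extend this coloring back across $H$. The extension typically fails at first---some edge or vertex in $H$ has all $7$ colors blocked---and one must then find a `rescue swap': a bichromatic Kempe chain in the current coloring whose reversal frees a color at the offending element without destroying propriety elsewhere. The delicate point is that in a total coloring the chains in question involve both vertex-colors and edge-colors, so a swap along a $(c_1,c_2)$-chain can break the total-coloring condition at a vertex $v$ if either $v$ or one of its edges lies on the chain inappropriately. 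Controlling these interactions---designing the configurations in the list so that a suitable Kempe chain can always be guaranteed to terminate in a safe spot---is what forces the fine case analysis and is the reason the theorem in the literature is distributed across several papers.
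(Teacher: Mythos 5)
The paper does not actually prove Theorem~\ref{tcd}; it is stated as a known result and attributed to \cite{rosenfeld}, \cite{vijay}, \cite{kostochka2}, and \cite{kostochka}, which between them cover $\Delta\leq 3$, $\Delta=4$, and $\Delta=5$ respectively. So there is no in-paper argument to compare against. Against that backdrop, your proposal is correctly pitched at the level of ``what those papers do'' --- minimal counterexample, structural reductions, discharging for unavoidability, Kempe-chain recolorings for reducibility --- and you even correctly flag that the vertex/edge interaction is what makes total-coloring Kempe chains treacherous.

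That said, what you have written is a strategy sketch, not a proof, and the gap is precisely where you say the difficulty lies. For $\Delta=5$ the entire content of Kostochka's 1996 paper \emph{is} the list of configurations and the verification that each is reducible under $7$-total-colorings; neither the list nor any single reducibility check appears in your argument, and ``a suitable Kempe chain can always be guaranteed to terminate in a safe spot'' is exactly the claim that has to be proved configuration by configuration. A second, more specific, gap: you assert that standard minimality arguments ``quickly force $G$ to be $\Delta$-regular'' and $2$-edge-connected. For proper edge coloring that is true via Vizing fans, but for a minimal $(\Delta+2)$-total-critical graph it is not automatic; the present paper's Lemma~\ref{lm2} only delivers minimum degree $\geq k$ for $(\Delta+k)$-critical graphs when $k>2$, i.e.\ it says nothing in the $k=2$ case you need here. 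Establishing the structural properties of a minimal counterexample is itself nontrivial for total coloring and would need to be argued, not asserted. As it stands the proposal names the right program but carries out essentially none of it.
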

For planar graphs, the total coloring conjecture is known to hold except when the maximum degree is $6$.

\begin{thm}[{\cite{borodin}, \cite{yap}, \cite{sanderzhao}}]\label{planar}
Let $G$ be a planar graph. If $\Delta(G)\neq 6$, then \textit{TCC} holds for $G$, otherwise $\chi''(G)\leq \Delta(G)+3$.
\end{thm}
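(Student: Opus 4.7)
The plan is to split on the maximum degree into three regimes. When $\Delta(G) \le 5$, the result follows immediately from Theorem~\ref{tcd}, since that theorem settles TCC for every graph of maximum degree at most $5$ without using planarity. The real content therefore lies in the two cases $\Delta(G) \ge 7$ and $\Delta(G) = 6$, and I would attack both by the discharging method, which is the canonical tool whenever the conclusion hinges on planarity.

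For $\Delta(G) \ge 7$ my target is the sharp bound $\chi''(G) \le \Delta(G)+2$. I would take a counterexample $G$ minimizing $|V(G)|+|E(G)|$ and establish a list of \emph{reducible configurations}, that is, small local patterns that cannot occur in $G$. A typical reduction picks such a configuration $C$ (for example, an edge joining two vertices of small degree, or a low-degree vertex whose neighbourhood has bounded total degree), deletes or uncolours its elements, invokes minimality on the remaining graph to obtain a total $(\Delta+2)$-coloring, and extends the coloring back over $C$ using a mix of Vizing-fan rotations, Kempe-chain swaps, and a direct count of forbidden colors at each uncolored element. To convert the absence of such configurations into a global contradiction I would assign initial charges by Euler's formula, e.g.\ $\mu(v) = d(v)-4$ for $v \in V(G)$ and $\mu(f) = d(f)-4$ for each face $f$, so that $\sum \mu = -8$. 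Discharging rules would then pass fixed amounts of charge from high-degree vertices and large faces to the reducibly-excluded low-degree vertices and short faces; showing that the final charge is non-negative everywhere contradicts the negative total sum, so no counterexample exists.

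The case $\Delta(G) = 6$ is treated inside the same framework but only yields the weaker bound $\Delta(G)+3$. With a palette of $\Delta+2 = 8$ colors and $\Delta = 6$, the arithmetic in the extension step is too tight: the natural reducible configurations do not have enough slack for the extension to go through, and no workable discharging rules can be made to balance. Enlarging the palette to $\Delta+3 = 9$ colors creates exactly the extra degree of freedom needed to recover a rich enough reducible set and to close the discharging argument. The principal obstacle throughout is the identification of reducible configurations strong enough to cover every planar graph while still being extendable; indeed, the reason TCC is open for planar graphs with $\Delta = 6$ is precisely that no sufficient list of reducible configurations is known at the $\Delta+2$ palette, and the exceptional clause in the statement records exactly that gap.
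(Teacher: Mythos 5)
This theorem is imported from the literature --- the paper states it with the citations to Borodin, Yap, and Sanders--Zhao and gives no proof of its own, so there is no in-paper argument to compare against. Your split at $\Delta(G)\le 5$, where you delegate to Theorem~\ref{tcd}, is correct. For $\Delta(G)\ge 7$ the general shape you describe (minimal counterexample, charges $\mu(v)=d(v)-4$ and $\mu(f)=d(f)-4$ summing to $-8$ by Euler's formula, reducible configurations removed and extended back with Kempe-chain and fan arguments) is indeed the method of the cited works, but what you write is a template rather than a proof: the three regimes $\Delta\ge 9$ (Borodin), $\Delta=8$, and $\Delta=7$ (Sanders--Zhao) each required their own, substantially different and quite laborious, set of unavoidable configurations and discharging rules, and collapsing them into one paragraph hides essentially all of the content. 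That would be acceptable for a cited result if you framed it as "this is proved by discharging in [refs]," but not as a self-contained proof sketch.

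The $\Delta(G)=6$ case is where the proposal takes a genuinely wrong turn. You propose to re-run the discharging machinery with a palette of $\Delta+3=9$ colors and claim the extra color gives "exactly the degree of freedom needed" --- but you give no reducible configurations, no rules, and no reason to believe such an argument closes; to my knowledge no discharging proof of this case exists. In fact none is needed: the bound $\chi''(G)\le \Delta(G)+3$ for \emph{every} planar graph is a trivial corollary of the "weak TCC for $4$-colorable graphs" argument that this paper spells out in its introduction --- color $V(G)$ with $\{1,2,3,4\}$ via the Four Color Theorem, color $E(G)$ with $\Delta+1$ colors from $\{3,\dots,\Delta+3\}$ via Vizing, uncolor the edges colored $3$ or $4$, note the uncolored subgraph is a disjoint union of paths and even cycles and each uncolored edge has a list of at least two free colors from $\{1,2,3,4\}$, and finish by $2$-edge-choosability. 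This handles $\Delta=6$ (and every other $\Delta$) with no planarity-specific structural analysis at all, so the exceptional clause of the theorem is by far its easiest part, not a place to deploy a second discharging argument.
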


Considering the difficulty level of TCC, it makes sense to study relaxations of TCC. 
As the title of the paper indicates, we intend to consider two ways to weaken TCC.  First one is the most obvious way: Increase the upper bound appearing in the statement of TCC. This leads to
\textit {weak-TCC} and $(k)$-TCC; see Section \ref {sec:hadwigertcc} for more details.  Now we will develop  the background for the second one.

Just like we defined line graphs and used it to perceive the edge coloring problem on $G$ as a  vertex coloring problem on $L(G)$, we can define a similar structure in the context of total coloring also.
Given a graph $G$ $=(V,E)$ the \textit{total graph} of $G$ is the graph $T(G)=(V'', E'')$ where $V''=$ $V\cup E$, the set of all elements of $G$, and $c_{1}c_{2}$ $\in E''$ whenever elements $c_1$,$c_2$ are adjacent in $G$. Given a class $\mathcal{F}$ of graphs, we define $T(\mathcal{F}):=$ $\{ T(G): G\in \mathcal{F}\}$. We use ${\cal T}$ to denote the class of total graphs i.e. ${\cal T} = T({\cal G})$, where ${\cal G}$ is the class of all simple finite graphs.
\begin{note} 
We shall call vertices in $T(G)$ that correspond to vertices in the original graph $G$ as \textit{v}-vertices and vertices that correspond to edges in $G$ as \textit{e}-vertices. For ease of notation, we denote an element of $G$, that is, an edge or a vertex in $G$, and its corresponding vertex in $T(G)$, by the same letter. Thus, the \textit{v}-vertex in $T(G)$ corresponding to $x\in V(G)$ is also denoted by $x$.
\end{note}

\begin{obs}\label{ob1}
Any total coloring of a graph $G$ corresponds to a proper vertex coloring of $T(G)$ and vice-versa. Thus, the total chromatic number of $G$ is equal to the chromatic number of $T(G)$ that is, $\chi''(G) = \chi(T(G))$. 
\end{obs}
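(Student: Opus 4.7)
The plan is to prove the observation essentially by unwinding definitions, since it is close to a tautology: the total graph $T(G)$ was literally constructed so that its adjacency relation encodes precisely the ``adjacency/incidence'' relation on the elements of $G$ which governs total colorings.

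First, I would fix an assignment $c \colon V(G) \cup E(G) \to \mathbb{N}$ of colors to the elements of $G$, which is the same thing as an assignment of colors to the vertex set $V(T(G)) = V(G) \sqcup E(G)$ of the total graph. I would then verify, case by case, that the constraints imposed on $c$ by the definition of a total coloring are in bijective correspondence with the constraints imposed by the definition of a proper vertex coloring of $T(G)$. Specifically: (i) for $u,v \in V(G)$, $c(u) \neq c(v)$ is required by ``$c|_{V(G)}$ is proper'' exactly when $uv \in E(G)$, which by definition of $T(G)$ is exactly when $u,v$ are adjacent in $T(G)$; (ii) for $e, f \in E(G)$, the condition $c(e) \neq c(f)$ imposed by ``$c|_{E(G)}$ is proper'' holds exactly when $e, f$ share a vertex in $G$, which is exactly when $e, f$ are adjacent in $T(G)$; (iii) for $u \in V(G)$ and $e \in E(G)$, the condition $c(u) \neq c(e)$ (forced by $c(uv) \notin \{c(u), c(v)\}$ whenever $e = uv$) holds exactly when $u$ is incident on $e$, which again is exactly the adjacency condition in $T(G)$ between a $v$-vertex and an $e$-vertex.

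Thus $c$ is a total coloring of $G$ if and only if $c$ is a proper vertex coloring of $T(G)$. Since both quantities $\chi''(G)$ and $\chi(T(G))$ are defined as the minimum number of colors needed for such an assignment over the same domain, the equality $\chi''(G) = \chi(T(G))$ follows immediately.

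I do not anticipate any real obstacle here: the whole argument is the remark that the three adjacency clauses in the definition of a total coloring collapse into the single adjacency relation of $T(G)$ by construction. The only care needed is to make the three cases above explicit so the reader sees that no constraint is lost or added in passing between the two formulations.
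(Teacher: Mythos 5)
Your proof is correct and is precisely the definition-unwinding the paper has in mind; the paper states this as an unadorned observation with no written proof, so your three-case check of the adjacency clauses is simply a fully explicit version of what the authors regard as immediate.
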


\begin{obs}\label{ob2}
	For every vertex $x\in V(G)$, the \textit{v}-vertex corresponding to $x$ and the \textit{e}-vertices corresponding to the edges in $E_{G}(x)$ form a clique of order $d_{G}(x)+1$ in $T(G)$. It follows that there exists a clique of order $\Delta(G)+1$ in $T(G)$.  Therefore, $\Delta(G)+1 \leq \omega(T(G)) \leq \chi(T(G))=\chi''(G)$. 
In fact, it is not difficult to see that  the clique number of the total graph of $G$, $\omega(T(G)) = \Delta(G) + 1$, when $\Delta(G) \ge 2$.
\end{obs}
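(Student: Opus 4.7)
The plan is to verify the three parts of the observation in sequence. For the first claim, I would note directly from the definition of $T(G)$ that the v-vertex $x$ is adjacent to every e-vertex corresponding to an edge of $E_G(x)$ (since these edges are incident on $x$ in $G$), and any two e-vertices in this set are adjacent in $T(G)$ because the corresponding edges share the endpoint $x$ in $G$. Together with $x$ itself, this yields $d_G(x)+1$ pairwise adjacent vertices. Applying this to a vertex of maximum degree gives a clique of order $\Delta(G)+1$, which combined with Observation \ref{ob1} produces the chain $\Delta(G)+1 \le \omega(T(G)) \le \chi(T(G)) = \chi''(G)$.

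For the final claim, namely $\omega(T(G)) \le \Delta(G)+1$ under the assumption $\Delta(G) \ge 2$, I would fix a maximum clique $K$ in $T(G)$ and write it as $K = K_v \sqcup K_e$, separating v-vertices from e-vertices. The defining adjacency in $T(G)$ forces $K_v$ to be a clique in $G$ and requires every edge of $K_e$ to be incident to every vertex of $K_v$. I would then split into cases on $|K_v|$. If $|K_v| \ge 3$, then $K_e = \emptyset$ since no edge of a simple graph can be incident on three distinct vertices, and $|K| = |K_v| \le \omega(G) \le \Delta(G)+1$. If $|K_v| = 1$, say $K_v = \{v\}$, then $K_e \subseteq E_G(v)$, giving $|K| \le d_G(v)+1 \le \Delta(G)+1$. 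If $|K_v| = 2$, say $K_v = \{u,v\}$, then the only edge incident to both $u$ and $v$ is $uv$ itself, so $|K_e| \le 1$ and $|K| \le 3 \le \Delta(G)+1$.

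The only remaining case is $|K_v| = 0$, where $K_e$ is a set of pairwise adjacent edges of $G$. Here I would invoke the classical fact that a family of pairwise adjacent edges in a simple graph either shares a common endpoint (forming a star) or forms a triangle: in the former case $|K_e| \le \Delta(G)$, while in the latter $|K_e| = 3 \le \Delta(G)+1$ by the hypothesis $\Delta(G) \ge 2$. Exactly this hypothesis is also what is needed to close the $|K_v|=2$ subcase, and it is sharp because $T(K_2)$ is a triangle while $\Delta(K_2)+1 = 2$. I do not anticipate any real obstacle beyond remembering the star-or-triangle dichotomy for pairwise adjacent edges, which drives the bookkeeping in the $|K_v|=0$ case.
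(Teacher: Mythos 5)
Your proof is correct and supplies exactly the bookkeeping that the paper leaves implicit behind the phrase ``it is not difficult to see.'' The lower bound argument matches the observation verbatim, and your case analysis on a maximum clique $K = K_v \sqcup K_e$ in $T(G)$ (split by $|K_v|$, with the star-or-triangle dichotomy for pairwise intersecting edges handling $|K_v|=0$) is the standard and essentially unique way to establish $\omega(T(G)) \le \Delta(G)+1$; the sharpness remark via $T(K_2)$ correctly explains why $\Delta(G)\ge 2$ is needed.
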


It is easy to see that if $\Delta(G)\leq 1$, then $\chi''(G)=\omega(T(G))$. Thus, in view of Observation \ref {ob2},   Conjecture \ref {tcc} can be thought to be 
suggesting the existence of a very  strong $\chi$-bounding function  for  the class of total graphs, and can be restated as follows: 

\begin {conj} [Restatement of total coloring conjecture]  \label {restateTCC} 
	$ \forall H \in {\cal T}, \chi(H) \le \omega(H) + 1$
\end {conj}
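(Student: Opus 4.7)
The plan is to establish Conjecture \ref{restateTCC} as literally a restatement of TCC (Conjecture \ref{tcc}), so the ``proof'' is just a short equivalence argument built on the two preceding observations. First I would take any $H \in \mathcal{T}$ and fix a simple finite graph $G$ with $H = T(G)$. Observation \ref{ob1} gives $\chi(H) = \chi''(G)$. If $\Delta(G) \ge 2$, Observation \ref{ob2} gives $\omega(H) = \Delta(G) + 1$, and the TCC bound $\chi''(G) \le \Delta(G) + 2$ is rewritten letter for letter as $\chi(H) \le \omega(H) + 1$. The converse direction is identical: starting from any $G$ with $\Delta(G) \ge 2$, set $H := T(G)$ and apply the restated inequality to $H$ to recover TCC for $G$.

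What remains is to dispose of the low-degree corners where Observation \ref{ob2} does not directly apply. If $\Delta(G) = 0$, then $T(G)$ has no edges, so $\chi(H) = 1 = \omega(H) \le \omega(H) + 1$. If $\Delta(G) = 1$, then each edge $uv$ of $G$ spans a triangle $\{u, v, uv\}$ in $T(G)$, so $T(G)$ is a disjoint union of triangles and isolated vertices, giving $\chi(H) = 3 = \omega(H) \le \omega(H) + 1$. TCC is also trivially verified in both of these cases, so the two conjectures agree throughout. Combining the main range with these degenerate cases yields the biconditional TCC $\Leftrightarrow$ Conjecture \ref{restateTCC}.

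The only point that needs even a moment's care is remembering that Observation \ref{ob2} is stated only for $\Delta(G) \ge 2$, so the $\Delta(G) \in \{0,1\}$ cases must be checked by hand rather than invoked as a corollary. Beyond this bookkeeping there is no real obstacle: a direct proof of Conjecture \ref{restateTCC} would amount to a proof of TCC itself, which is precisely the hard open problem that motivates the paper's subsequent weakenings.
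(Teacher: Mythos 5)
Your equivalence argument is correct and is essentially the same reasoning the paper uses: Observation~\ref{ob1} to translate $\chi(T(G))$ into $\chi''(G)$, Observation~\ref{ob2} to identify $\omega(T(G))$ with $\Delta(G)+1$ when $\Delta(G)\ge 2$, and the explicit remark preceding Conjecture~\ref{restateTCC} that $\chi''(G)=\omega(T(G))$ when $\Delta(G)\le 1$, so that the inequality in Conjecture~\ref{restateTCC} is a letter-for-letter rewriting of TCC. Your hand-verification of the $\Delta(G)\in\{0,1\}$ corner cases is exactly the bookkeeping the paper glosses with ``it is easy to see.''
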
    

The total coloring conjecture 
    was  intensely  studied  by several brilliant graph theorists for  more than  
    50  years, but still remains unsolved and has earned the reputation of being one of the toughest problems in graph coloring. Therefore it makes sense to 
    study relaxed versions  of this question. 
From the structural
    perspective, one obvious way to relax Conjecture  \ref {restateTCC}  is to replace clique number $\omega(H)$  by Hadwiger number $\eta(H)$, the number of vertices in the largest clique minor of $H$. This directly leads us to the
    study of Hadwiger's conjecture on Total Graphs, hereafter abbreviated as ``HC on ${\cal T}$''. (See next section for the formal statement of Hadwiger's Conjecture.)

    \begin {problem} [Hadwiger's Conjecture on Total Graphs or ``HC on $\mathcal {T}$''] \label {hconT}
    $$      \chi(H) \le \eta(H), \forall H \in {\cal T} $$
    \end {problem}

\begin {rem}
    Careful readers may object that if  we replace $\omega(H)$ by
    $\eta(H)$ in Conjecture \ref {restateTCC},  the new inequality should be $\chi(H) \le \eta(H) + 1, \forall H \in {\cal T}$. But we will show in Theorem \ref {th1} that Conjecture 
    \ref {restateTCC} indeed implies \textit {HC on $\mathcal T$}  as stated in Problem \ref {hconT}. 
    \end {rem}

    Needless to say, Hadwiger's conjecture is an even more celebrated and long standing conjecture than TCC in 
    graph theory, owing its origin to the Four Color Theorem itself. While the general case of Hadwiger's conjecture remains  unsolved, it was proved for several special classes  of graphs. For example,
	Reed and Seymour \cite{reed} proved Hadwiger's conjecture for line graphs more than 15 years ago. Chudnovsky and Fradkin \cite{chud}  even generalized the result to quasi-line graphs.
    But to the best of our knowledge,  for the class of total graphs, whose definition is similar in spirit to that of line of graphs, Hadwiger's conjecture
    has not yet been proved or studied.

\subsection {Hadwiger's Conjecture: The connection between the general case and  the case of  total graphs} 

Given an edge $e$ $= uv$ in $G$, the \textit{contraction} of the edge $e$ involves the following : deleting vertices $u$ and $v$, introducing a new vertex $w_{e}$, and making $w_{e}$ adjacent to all vertices in the set $(N_{G}(u)\cup N_{G}(v))\setminus\{ u,v\}$. The new graph thus obtained is denoted by $G/e$.

A graph $H$ is said to be a \textit{minor} of $G$ if a graph isomorphic to $H$ can be obtained from $G$, by performing a sequence of operations involving only vertex deletions, edge deletions, and edge contractions.
If $G$ contains $H$ as a minor, then we write $H\preceq G$.

The celebrated \textit{Hadwiger's conjecture} is a far-reaching generalization of the \textit{Four Color Theorem}. It was proposed by Hugo Hadwiger \cite{had} in 1943.
\begin{conj}[Hadwiger's Conjecture]
Given any graph $G$ and $t>0$,
\[ \chi(G) \geq t \implies K_{t} \preceq G \]
In other words, every graph has either a clique minor on $t$ vertices ($K_{t}$-minor) or a proper vertex-coloring using $(t-1)$ colors.       
\end{conj}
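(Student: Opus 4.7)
The statement to prove is Hadwiger's Conjecture itself, one of the most important open problems in graph theory, so my proposal necessarily describes plausible lines of attack rather than a complete route. The most natural approach is induction on $t$. The cases $t \le 3$ are trivial; $t=4$ is equivalent to the Four Color Theorem via Wagner's characterization of $K_5$-minor-free graphs; and $t=5$ and $t=6$ were settled by Robertson, Seymour and Thomas by reducing to the Four Color Theorem through structure theorems for apex and double-cross graphs. For $t \ge 7$ the inductive step has no known handle: one would like to show that in a minimum counterexample some local operation (vertex deletion, edge contraction, or exploiting a small separator) reduces the problem to a smaller graph, but no such operation is known to track $\chi$ and the Hadwiger number simultaneously.

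A second route is the contrapositive: assume $K_t \not\preceq G$ and aim for $\chi(G) \le t-1$. The Kostochka--Thomason theorem says that a $K_t$-minor-free graph has average degree $O(t\sqrt{\log t})$, hence is $O(t\sqrt{\log t})$-degenerate and colorable with that many colors. I would try to sharpen this by passing to a $t$-critical counterexample $G$, which must satisfy $\delta(G) \ge t-1$, and then strengthen the known extremal bounds under this extra minimum-degree hypothesis. The main obstacle is that the $\sqrt{\log t}$ factor is known to be tight at the level of \emph{average} degree, as witnessed by random constructions, so any successful argument must exploit some structural property of $K_t$-minor-free graphs with high minimum degree, for which no useful structure theorem is currently known.

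Because the general case appears genuinely out of reach, I would concentrate the remaining effort on restricted classes where the statement is tractable, in the spirit of Reed--Seymour for line graphs and Chudnovsky--Fradkin for quasi-line graphs. In the setting of this paper the goal would be to exploit the rigid vertex--edge incidence structure of a total graph $T(G)$ to build an explicit $K_{\chi(T(G))}$-minor: the stars at high-degree vertices of $G$ together with the cliques furnished by Observation~\ref{ob2} are natural candidates for branch sets, and the combinatorial task is to assemble them into a minor of the required order. The hardest piece is likely to be handling graphs $G$ that have no single dominating high-degree vertex but many overlapping moderately large neighborhoods, where the branch sets for different colors contend for the same $e$-vertices.
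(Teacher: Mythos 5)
You have correctly recognized that the statement in question is Hadwiger's Conjecture itself, which is an open problem; the paper does not attempt to prove it and merely records it as a \texttt{conj} environment for background, citing the known partial results (trivial for $t\le 3$, Hadwiger for $t=4$, equivalence with the Four Color Theorem at $t=5$ via Wagner, Robertson--Seymour--Thomas at $t=6$, open for $t\ge 7$). Since there is no proof in the paper to compare against, there can be no ``genuinely different route'' or ``gap'' in the usual sense: your proposal is an honest survey of the state of the art rather than a proof, and that is the only reasonable thing one can offer here.

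Two small remarks on your survey. First, the $t=5$ case was already resolved by Wagner's 1937 structure theorem combined with the Four Color Theorem; Robertson, Seymour and Thomas's contribution was the $t=6$ case (the paper cites \cite{rst} for $t=6$, consistent with this). Your phrasing lumps $t=5$ and $t=6$ together under Robertson--Seymour--Thomas, which slightly misattributes the $t=5$ case. Second, your remarks about the Kostochka--Thomason $O(t\sqrt{\log t})$ bound and the tightness of the $\sqrt{\log t}$ factor at the level of average degree are accurate and correctly identify the central obstruction to the contrapositive approach. Your last paragraph anticipates exactly the strategy the paper pursues in its actual theorems (building explicit clique minors in $T(G)$ from stars at high-degree vertices and from spanning-tree or connectivity structure), so although it is not relevant to the conjecture statement itself, it shows you have correctly located where the paper's genuine contributions lie, namely Theorems~\ref{th1}, \ref{tcc3}, and~\ref{th2} rather than the conjecture environment you were given.
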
      

For $t\leq 3$, Hadwiger's conjecture is easy to prove. The $t=4$ case was proved by Hadwiger himself in \cite{had}. The Four Color Theorem was proved by Appel and Haken \cite{appel,appel2} in 1977. Using a result of Wagner~\cite{wagner}, it can be shown that the Four Color Theorem is equivalent to Hadwiger's conjecture for $t=5$. Robertson, Seymour, and Thomas \cite{rst} proved in 1993 that Hadwiger's conjecture holds true for $t=6$. The conjecture remains unsolved for $t\geq 7$. So far, Hadwiger's conjecture has been proved for several classes of graphs; see \cite{belkale}, \cite{chud}, \cite{li}, \cite{reed}, \cite{wood}, \cite{xu}.


 Now we describe  a curious observation from \cite {chandran}.  
Given a graph $G$, its square $G^{2}$ is defined on the vertex set $V(G)$ with $u$ and $v$ being adjacent in $G^2$ whenever the distance between $u$ and $v$ in $G$ is at most $2$. For a class of graphs ${\cal F}$, let ${\cal F}^2$ denote the  set of graphs $\{ G^2 : G \in {\cal F} \}$.  Recall that a split graph is a graph whose vertex set can be partitioned into an independent set and a clique.
Let ${\cal S}$ denote  the special class of split graphs, whose vertex set is  partitioned into an independent set $A$ and a clique $B$ with the following  extra constraints: (1) Each vertex in $B$ has exactly
$2$ neighbours in $A$.  (2) There  are no two vertices in $B$ having the same neighbourhood in $A$.  

 The following fact is from \cite {chandran}.  (In \cite {chandran}, the following fact is not stated explicitly  in this detail, but can be easily read out from the proof of  Theorem 1.2, therein.) 
 
 \begin {fct}
  Proving the general case of Hadwiger's conjecture is equivalent to proving  HC for the class ${\cal S}^2$.
 \end {fct}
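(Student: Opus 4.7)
My plan is to prove both directions of the equivalence. The forward implication is trivial: if HC holds in general, then in particular it holds for every graph in ${\cal S}^2$. The content lies entirely in the converse, for which I would associate to each simple graph $G$ a split graph $G^*\in{\cal S}$ whose square encodes $G$ tightly enough that HC on $(G^*)^2$ forces HC on $G$. The construction is the natural one suggested by the definition of ${\cal S}$: let $A:=V(G)$ be the independent side and $B:=E(G)$ be the clique side, with each edge-vertex $e=uv\in B$ adjacent in $G^*$ only to its two endpoints $u,v\in A$. Membership in ${\cal S}$ is immediate, since every $B$-vertex has exactly two $A$-neighbours, and simplicity of $G$ ensures that no two $B$-vertices share the same neighbourhood in $A$.

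The next step is a short distance analysis in $G^*$ to read off the structure of $H:=(G^*)^2$. One checks that $B$ still induces a clique; that $u,v\in A$ are adjacent in $H$ iff they share a common $B$-neighbour iff $uv\in E(G)$, so that $H[A]\cong G$ with no spurious edges; and that every non-isolated $v\in A$ is adjacent in $H$ to all of $B$, since any incident edge of $G$ provides a length-$2$ path from $v$ to every other element of the clique $B$. From this, $\chi(H)=|E(G)|+\chi(G)$ drops out: the $|E(G)|$ vertices of $B$ need pairwise distinct colours, all forbidden to every non-isolated $A$-vertex, and on top of these $H[A]\cong G$ still has to be properly coloured.

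The technical heart of the argument is the matching bound $\eta(H)\le |E(G)|+\eta(G)$. Given a $K_t$-minor of $H$ with branch sets $X_1,\dots,X_t$, I would partition them into those meeting $B$ and those lying entirely in $A$. The first group consumes distinct $B$-vertices, so has size at most $|B|=|E(G)|$. The second group, using $H[A]=G$, consists of vertex-disjoint connected subgraphs of $G$ pairwise joined by edges of $G$ and therefore forms a clique minor of $G$, of size at most $\eta(G)$. Invoking the assumed HC on ${\cal S}^2$ for $H$ then gives $|E(G)|+\chi(G)=\chi(H)\le\eta(H)\le |E(G)|+\eta(G)$, and the $|E(G)|$ terms cancel to yield $\chi(G)\le\eta(G)$ for the arbitrary input graph $G$.

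The step that I expect to require most care is the upper bound on $\eta(H)$, and within it the observation that $H[A]$ introduces no edges beyond those of $G$; this is what guarantees that the $A$-only branch sets correspond to a genuine minor of $G$ rather than of some denser intermediate graph. Everything else is bookkeeping, with the edgeless case of $G$ handled trivially since then $H$ is itself edgeless and both sides of Hadwiger's inequality reduce to $1$.
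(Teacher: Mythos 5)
Your proof is correct. The paper itself gives no argument for this Fact---it cites the proof of Theorem~1.2 of Chandran, Issac and Zhou and remarks that the statement ``can be easily read out'' from there---so there is no in-paper proof to compare against. Your self-contained reconstruction, via the split graph $G^*$ on $V(G)\sqcup E(G)$ with $A=V(G)$ independent and $B=E(G)$ a clique, the identity $\chi\bigl((G^*)^2\bigr)=|E(G)|+\chi(G)$ for $G$ without isolated vertices, and the bound $\eta\bigl((G^*)^2\bigr)\le|E(G)|+\eta(G)$ obtained by splitting the branch sets into those that meet $B$ and those contained in $A$ (the latter forming a clique minor of $H[A]\cong G$), is exactly the standard reduction underlying the cited result; your verification that $H[A]$ acquires no edges beyond those of $G$ is indeed the point on which the $\eta$ upper bound hinges, and the edgeless and isolated-vertex degeneracies are handled appropriately.
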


 Obviously proving HC for ${\cal S}^2$ is extremely difficult, despite its tantalizingly specialized appearance.   From  this perspective, it  is natural  to
 consider classes of graphs that can be obtained by simple structural  modifications of ${\cal S}$ and see how difficult it is to prove HC for the squares of such classes.  
 One  such  modification
 is to assume that both $A$ and $B$ are independent sets, keeping everything else same, i.e.  the class of bipartite graphs with parts $A$ and $B$ with the extra constraints: (1) Each vertex in $B$ has exactly $2$ neighbours in $A$. (2) No two vertices  in $B$ have the same neighbourhood in $A$.   In other words get a bipartite graph
 from each split graph in ${\cal S}$ by converting the clique $B$  to an independent set.  Let this new class be  denoted by ${\cal \hat S}$.  How difficult is it to prove HC for
 ${\cal \hat S}^2$?   Careful inspection reveals that ${\cal \hat S}$ is a familiar class of graphs, the sub-divided graphs.



A graph $G$ is a subdivided graph if it can be obtained from another graph $H$ by subdividing each edge $uv$ in $H$; that is, replacing $uv$ by a path $uwv$, where $w$ is a new vertex. 
Now if we consider the set of newly introduced vertices as $A$ and the original vertices of $H$ as $B$, it is easy to see that $G \in  \cal \hat S$.  The converse is also true: it is easy
to verify that if a graph  $G$ belongs to ${\cal \hat S}$, then there exists another graph $H$ such that $G$ is obtained by sub-diving all the edges of $H$. 
Interestingly,   the class of squares of subdivided graphs is the same as the class of total graphs.

\begin{fct}
	If $H$ is a graph and $G$ is obtained by subdividing every edge of $H$, then the graph $G^{2}$ is isomorphic to the total graph $T(H)$ of $H$. In other words, the class of
	total graphs ${\cal T} = {\cal \hat S}^2$.
\end{fct}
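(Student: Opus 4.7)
The plan is to exhibit an explicit isomorphism between $G^{2}$ and $T(H)$ and verify that it preserves adjacency by a short case analysis. Define $\phi : V(G) \to V(T(H))$ by sending each original vertex $u \in V(H) \subseteq V(G)$ to the $v$-vertex $u$ of $T(H)$, and each subdivision vertex $w_e$ (the new vertex inserted in the middle of $e \in E(H)$) to the $e$-vertex $e$ of $T(H)$. Since subdivision is one-to-one on edges, this $\phi$ is clearly a bijection between the vertex sets. The task then reduces to showing: for all $x,y \in V(G)$, $\mathrm{dist}_G(x,y) \le 2$ if and only if $\phi(x)$ and $\phi(y)$ are adjacent in $T(H)$.

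I would organize the verification into three cases, using the crucial structural fact that in $G$ every subdivision vertex $w_e$ has degree exactly $2$ (its neighbours are the endpoints of $e$) and every original vertex $u \in V(H)$ has only subdivision vertices as neighbours in $G$, namely $\{w_e : e \in E_H(u)\}$.

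\textbf{Case 1: $x,y \in V(H)$.} Then $x,y$ are non-adjacent in $G$, and a common neighbour of $x$ and $y$ in $G$ must be some $w_e$ with $e$ incident on both $x$ and $y$; this happens iff $xy \in E(H)$. So $xy \in E(G^2)$ iff $xy \in E(H)$, which matches the adjacency of two $v$-vertices in $T(H)$.
\textbf{Case 2: $x = w_e$, $y = w_f$ are subdivision vertices.} They are non-adjacent in $G$, and a common neighbour in $G$ must lie in $\{\text{endpoints of }e\} \cap \{\text{endpoints of }f\}$; this is non-empty iff $e$ and $f$ share an endpoint in $H$, i.e.\ are adjacent edges, matching adjacency of two $e$-vertices in $T(H)$.
\textbf{Case 3: $x \in V(H)$, $y = w_e$.} They are adjacent in $G$ iff $x$ is an endpoint of $e$; they cannot be at distance exactly $2$ because any common neighbour of $x$ and $w_e$ would have to be both an endpoint of $e$ (to neighbour $w_e$) and a subdivision vertex (to neighbour $x$), which is impossible. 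Hence $xy \in E(G^2)$ iff $x$ is incident with $e$ in $H$, matching the adjacency of a $v$-vertex and an $e$-vertex in $T(H)$.

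There is no genuine obstacle here; the only thing to be careful about is ensuring in Case 3 that one does not overcount by allowing distance-$2$ paths when the endpoints already lie at distance $1$, and, more importantly, that no pair ends up adjacent in $G^2$ for spurious reasons --- this is exactly what the degree-$2$ property of subdivision vertices rules out. With the three cases checked, $\phi$ is an isomorphism, giving $G^2 \cong T(H)$, which also proves the class-level equality ${\cal T} = {\cal \hat S}^2$.
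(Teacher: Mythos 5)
The paper offers no proof of this fact---it is stated as a self-evident observation and left unverified. Your argument is the natural and correct one: the bijection you define is precisely the canonical identification of subdivision vertices with edges of $H$ and original vertices with themselves, and your three-case adjacency check is complete. In particular, your handling of Case 3 is the only place where care is required (ruling out distance exactly $2$ between an original vertex and a subdivision vertex), and you correctly observe that the neighbourhood of an original vertex in $G$ consists entirely of subdivision vertices while the neighbourhood of a subdivision vertex consists entirely of original vertices, so these sets never intersect. The class-level equality ${\cal T}={\cal \hat S}^2$ then follows once you combine your per-graph isomorphism with the remark in the paper (just before the Fact) that membership in ${\cal \hat S}$ is equivalent to being a subdivision $S(H)$ of some $H$; it would be worth making that reliance explicit in your last sentence, but it is not a gap.
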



Considering the structural closeness of  ${\cal \hat S}$ and ${\cal S}$ and the fact from \cite {chandran} that HC on ${\cal S}^2$ is equivalent to the general case of HC,  it is not very surprising
that  proving Hadwiger's Conjecture for total graphs turns  out be challenging.

\begin{rem}
Let $G$ be a graph and let $T(G)$ be its total graph. Denote the set of \textit{v}-vertices and the set of \textit{e}-vertices in $T(G)$  by $V$ and $E$, respectively. Note that the subgraph of $T(G)$ induced by $V$ is isomorphic to the original graph $G$, the subgraph of $T(G)$ induced by $E$ is isomorphic to the line graph $L(G)$ of $G$, and the bipartite graph induced by the edges between the sets $V$ and $E$ is precisely the subdivided graph $S(G)$ of $G$. 
\end{rem}

\subsection{Our contributions to Hadwiger's conjecture on  total graphs}

In Section~\ref{sec:hadwigertcc} and Section~\ref{sec:weaktcc},  we   explore the difficulty level of   \textit {HC on $\mathcal {T}$ }  in comparison with  \textit{TCC}.
Our first result is Theorem~\ref{th1} which states that if \textit{TCC} is true for a class $\mathcal{F}$ of graphs that is closed under taking subgraphs, then Hadwiger's conjecture holds true for the class $T(\mathcal{F})$.  Taking $\mathcal {F} = \mathcal {G}$, the class of simple finite graphs, we infer that TCC implies HC  on  $\mathcal {T}$, thus justifying our claim that Problem  \ref {hconT} is a relaxation  of Conjecture \ref {tcc} and its restatement,  Conjecture \ref {restateTCC}. But is TCC indeed a stronger statement than \textit { HC on $\mathcal {T}$} in the sense that
some  hypothesis weaker  than  \textit {TCC}  implies \textit { HC on $\mathcal {T}$}? Our next result confirms this.

We will refer to the statement ``$\forall G \in \mathcal {G}$, $\chi''(G)\leq \Delta(G)+3$'' as   \textit{weak TCC}.
In Theorem~\ref{tcc3}, we show that if  \textit{weak TCC} is true for a class $\mathcal{F}$ of graphs that is closed under taking subgraphs, then Hadwiger's conjecture is true for the class $T(\mathcal{F})$. Thus the weaker hypothesis \textit {weak TCC} implies \textit {HC on $\mathcal {T}$}; therefore \textit {HC on $\mathcal T$ } is strictly  easier than \textit {TCC}. A graph $G$ is $t$-total
critical if $\forall e \in E(G), \chi''(G - e) < \chi''(G) = t$.  We show that
for every $(\Delta(G)+3)$-total critical graph  $G$, $T(G)$ has a clique minor of order $\Delta(G) + 3$. 

For each fixed positive integer $k \ge 2$, let    $(k)$-\textit{TCC} be the statement,   `` $\forall G \in \mathcal G$, $\chi''(G)\leq \Delta(G)+k$.''  Thus  $2$-\textit {TCC} is same as \textit {TCC}
and $3$-\textit{TCC} is same as \textit {weak-TCC}.  Can we prove that the weaker hypothesis $k$-\textit{TCC}  (for some fixed positive integer $k \ge 4$) implies \textit {HC on $\mathcal T$}?
We do not know the answer, but our approach of showing the existence of a $\Delta(G)+k$ clique minor in $T(G)$, assuming that $G$ is $(\Delta+k)$-total critical, does not seem to work when $k\geq 4$. More sophisticated approaches may be needed to answer this question.

\com{ A typical
approach may be to consider an appropriate  third parameter, prove the conjecture directly for graphs having this parameter value above a threshold and  if the value of the parameter is  low,
then find some different method to attack the problem on such graphs.
 Connectivity is usually a natural choice for such a third 
parameter since if the connectivity is low, minimum separators can be used to break the graph into smaller components, one of which may have its chromatic number sufficiently close to
the chromatic number of the original graph. Moreover,
the reason for not being to able find larger minors in $T(G)$ seems to be directly related to the low edge connectivity of $G$. 
}

However, if we have the additional guarantee that the graph $G$ has a high vertex connectivity, then Hadwiger's Conjecture will hold for $T(G)$ if $(k)$-TCC is true for $G$.
We show in Theorem~\ref{th2} that if $(k)$-\textit{TCC} is true, then Hadwiger's conjecture is true for $T(\mathcal{F})$, where $\mathcal{F}:=\{G : \kappa(G)\geq 2k-1 \}$ (here, $\kappa(G)$ denotes the vertex-connectivity of $G$). By combining this with the upper bound of Molloy and Reed \cite{molloyreed},  we get that there exists a constant $C'$  such that Hadwiger's conjecture is true for $T(\mathcal{F})$, where $\mathcal{F}:=\{G : \kappa(G)\geq C' \}$. Here  $C'=2C-1$, where $C$ is the constant from Theorem \ref {molloy};
in \cite {molloyreed} Molloy and Reed  proved $C= 10^{26}$, but mention that
with more detailed analysis, $C$ can be brought down to $500$. 

\com{Unfortunately we could not make much progress for the other side, namely on  what could be told about the total graphs of graphs whose connectivity is low: to begin with, total chromatic number
poses more challenges compared to usual chromatic number when we try to infer about its behavior in the resulting components as we decompose the graph based on its minimum separator. 
}


\subsection{Our contributions to the total coloring literature}

  In this section we concentrate on the first (and the more straightforward)  relaxation of TCC  mentioned earlier: \textit {weak-TCC}. 

 The first reason that motivated us to go through the known literature on  total coloring to find out the important special classes $\mathcal F$  of graphs for which \textit {weak TCC} is known
 to hold  (while \textit {TCC} may still remain unsolved), is the promise of 
 Theorem \ref {tcc3}  that  \textit {HC}  is true for  $T(\mathcal F)$ for such $\mathcal F$. We realized that there are some important graph classes in this category,
 for example  planar graphs.
Unfortunately, for general graphs, it seems the chance of either \textit{TCC} or weak \textit{TCC} getting proved in the near future is very less. 


Is it possible that 
 \textit {weak TCC}  is  a more reasonable conjecture than TCC? It is  remarkable that another  
 very well-known and widely believed conjecture, the \textit{list coloring conjecture}, implies the weak \textit{TCC}, whereas  \textit {TCC} seems beyond its reach. 
\textit{TCC} is indeed a bold conjecture, in the sense that at times it makes us think that it is not unreasonable to look for counterexamples. For example, despite intense research for decades, the conjecture could not be proved for planar graphs with maximum degree $6$, though it has been found to hold good for all other cases of planar graphs. On the other hand, it is  very easy to prove weak \textit{TCC} for all planar graphs: \textit{If $G$ is a planar graph, then $\chi''(G)\leq \Delta(G)+3$}. Moreover, \textit {weak TCC}  is in fact true for a  much wider class, namely $4$-colorable graphs, which properly includes
planar graphs, whereas proving TCC on $4$-colorable graphs seems to be  extremely  difficult.  This led us to survey the status of weak TCC on $5$-colorable graphs, and try  to fill the research
gap therein.

Weak \textit{TCC} can be proved for $4$-colorable graphs (that is, when $\chi(G)\leq 4$) using the following well-known argument: We color the vertices of the graph using the colors $\{1, 2, 3, 4\}$. Applying Vizing's theorem we can color the edges of the graph using $\Delta+1$ colors from $\{3, 4, \ldots,\Delta(G) + 3 \}$. We uncolor all edges with colors $3$ or $4$. Note that for every uncolored edge, there exist at least two colors in $\{1,2,3,4\}$ that are not the colors assigned to its endpoints. Let us associate with each uncolored edge the list containing these two colors. It is easy to see that the subgraph induced by the uncolored edges consists only of paths and even cycles, and is therefore $2$-edge-choosable. This proves weak \textit{TCC} for $4$-colorable graphs. As a special case, planar graphs satisfy weak \textit{TCC} since by the four color theorem all planar graphs are $4$-colorable.

As mentioned earlier this naturally leads to the question whether weak \textit{TCC} can be proved for $k$-colorable graphs with $k\geq 5$. It is not the case that
 researchers who worked on total coloring  failed  to notice this question altogether. 
In fact, from the early days of research in total coloring, researchers have tried to find upper bounds for total chromatic number of a graph in terms of its (vertex) chromatic number. 

\begin {rem} 
If we go through the mainstream literature on
total coloring, apart from efforts to  bring the upper bound closer to the conjectured $\Delta +2$, there were efforts to study TCC for graph classes defined by their maximum degree, i.e. $\Delta \le 4$,
$\Delta \le 5$ etc. Obviously the other two parameters for such study were  the chromatic number $\chi(G)$ and the chromatic index $\chi'(G)$. The latter is essentially $\Delta$, due to Vizing's Theorem.
Studying TCC for graphs of bounded chromatic number  was  the next natural option after bounded maximum degree graphs. 
\end {rem} 

The most important result in this direction was proved by Hind \cite{hind}.
\begin{thm}[Hind]\label{hind}
 For every graph $G$, \[\chi''(G)\leq\chi'(G)+2\left\lceil\sqrt{\chi(G)}\right\rceil\leq  \Delta(G)+1+2\left\lceil\sqrt{\chi(G)}\right\rceil.\]
\end{thm}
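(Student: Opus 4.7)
The second inequality follows immediately from Vizing's theorem, so the task is the first. Let $k=\lceil\sqrt{\chi(G)}\rceil$ and write $\chi'=\chi'(G)$, $\chi=\chi(G)$. The plan is to construct a total coloring using the palette $\{1,\dots,\chi'+2k\}$: properly edge-color $G$ with the first $\chi'$ colors, and handle the vertices using $2k$ fresh ``bonus'' colors $\alpha_1,\dots,\alpha_k,\beta_1,\dots,\beta_k$ augmented by whatever edge colors happen to be unused at a given vertex.

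To exploit $\chi$, fix a proper $\chi$-coloring of $G$ and, using $\chi\le k^2$, arrange the color classes in a $k\times k$ grid (padding with empty classes if necessary) so that each vertex $v$ inherits a type $(i(v),j(v))\in [k]\times [k]$. For each $v$, set
\[
L(v)\;=\;\{\alpha_{i(v)},\,\beta_{j(v)}\}\;\cup\;\bigl(\{1,\dots,\chi'\}\setminus C_v\bigr),
\]
where $C_v$ is the set of colors appearing on edges incident to $v$ (so $|C_v|=d(v)$ and $|L(v)|\ge \chi'+2-d(v)$). Any color in $L(v)$ automatically avoids the edges at $v$, so the problem reduces to producing a proper vertex coloring $c$ of $G$ with $c(v)\in L(v)$; combining such a $c$ with the edge coloring yields a total coloring on $\chi'+2k$ colors.

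The main obstacle is establishing this list coloring. A purely ``two-choice'' argument from $\{\alpha_{i(v)},\beta_{j(v)}\}$ alone does not suffice in general -- on $K_{k^2}$, for instance, only $2k$ colors cannot properly color a $k^2$-clique when $k$ is large -- so the argument must genuinely leverage the unused-edge palette $\{1,\dots,\chi'\}\setminus C_v$, which is large precisely at low-degree vertices. Two structural features are the key leverage: (i) two adjacent vertices have distinct types (color classes are independent), so their bonus lists share at most one color; and (ii) a preferred color $\alpha_{i(v)}$ can only be blocked by a neighbor also assigned $\alpha_{i(v)}$, and such a neighbor must lie in the row $A_{i(v)}=\{u:i(u)=i(v)\}$, whose induced subgraph is $k$-colorable by its column refinement; the symmetric statement holds for $\beta_{j(v)}$ and the column $B_{j(v)}=\{u:j(u)=j(v)\}$.

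My plan is a structured greedy list coloring. Process the rows $A_1,\dots,A_k$ in order and, within each row, process vertices guided by the natural $k$-coloring of $G[A_i]$ by columns. For each vertex $v$, attempt $\alpha_{i(v)}$ first; if it is blocked by a previously colored neighbor, attempt $\beta_{j(v)}$; if both are blocked, choose any color from $\{1,\dots,\chi'\}\setminus C_v$ not already taken by a colored neighbor. The hardest step is a counting argument verifying that this fallback is always available -- essentially showing that no vertex can simultaneously have $\alpha_{i(v)}$, $\beta_{j(v)}$, and every color in its unused-edge palette blocked. Because blockers of $\alpha_{i(v)}$ sit inside $A_{i(v)}$, blockers of $\beta_{j(v)}$ sit inside $B_{j(v)}$, and the unused-edge palette has size $\chi'-d(v)$, the main work is to combine the $k$-colorability of $G[A_{i(v)}]$ and $G[B_{j(v)}]$ with a careful choice of processing order to rule out the worst case; doing so closes the argument and yields the claimed bound.
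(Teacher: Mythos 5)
The paper does not prove this theorem; it is cited from Hind's 1990 paper~\cite{hind}, so there is no in-paper argument to compare against. Turning to your proposal: the second inequality is fine, but your list construction for the first is provably too restrictive, not merely difficult to finish. Take $G=K_{16}$, a $15$-regular class~I graph. With \emph{any} proper edge coloring using $\chi'(K_{16})=15$ colors, every vertex is incident to all $15$ edge colors, so $C_v=\{1,\dots,15\}$ for every $v$, the spare-edge part $\{1,\dots,\chi'\}\setminus C_v$ is empty, and every list collapses to $L(v)=\{\alpha_{i(v)},\beta_{j(v)}\}$. Since $\chi(K_{16})=16$ and $k=\lceil\sqrt{16}\rceil=4$, the union of all lists is $\{\alpha_1,\dots,\alpha_4,\beta_1,\dots,\beta_4\}$, just $2k=8$ colors; but a proper vertex coloring of $K_{16}$ needs $16$ pairwise distinct colors, so no list coloring from these $L(v)$ can exist, regardless of processing order. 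Concretely, your row-major greedy already jams at the sixth vertex: $(1,1)\mapsto\alpha_1$, $(1,2)\mapsto\beta_2$, $(1,3)\mapsto\beta_3$, $(1,4)\mapsto\beta_4$, $(2,1)\mapsto\alpha_2$, and then $(2,2)$ sees $\alpha_2$ and $\beta_2$ both blocked with an empty fallback.

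You in fact anticipate this when you remark that $2k$ colors cannot color $K_{k^2}$, and you appeal to the unused-edge palette to rescue the argument; but on any $\Delta$-regular class~I graph that palette vanishes at every vertex, so the very obstacle you identified is not cured by the fix you propose. Hind's bound is of course still true for $K_{16}$, which tells you that a correct proof must let vertices take colors outside your $L(v)$ -- for instance by choosing a non-optimal or adaptive edge coloring, by recoloring edges during the vertex phase, or by a phased ``shift'' recoloring in the spirit of the argument the authors give for their Theorem~\ref{th4}. As written, your argument cannot be closed.
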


We note that for small values of $\chi(G)$ (such as $\leq 9$), Theorem \ref{hind} implies $\chi''(G)\leq \Delta(G)+7$. Naturally many researchers have attempted to improve this result using adaptations of the technique used in Hind's proof. 
One such result was presented by S\'anchez-Arroyo \cite{sanchez} and a slightly better bound was obtained by Chew \cite{chew}.

\begin{thm}[Chew]\label{chew}
For any connected multigraph $G$, 
\[ \chi''(G)\leq \begin{cases}
    \chi'(G) + \left\lceil\frac{\chi(G)}{3}\right\rceil+ 1 & \mbox{if } \chi(G) \equiv 2 \pmod{3} \\[.1in]
    \chi'(G) + \left\lfloor\frac{\chi(G)}{3}\right\rfloor + 1 & \text{otherwise}
  \end{cases} \] 
\end{thm}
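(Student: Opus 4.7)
The plan is to refine Hind's technique (which gives the $2\lceil\sqrt{\chi(G)}\rceil$ term) so that each batch of three vertex color classes costs only one extra color instead of two. Begin with an optimal proper edge coloring $\varphi : E(G) \to \{1,2,\ldots,\chi'(G)\}$ together with an optimal partition of $V(G)$ into independent sets $V_1,V_2,\ldots,V_k$, where $k=\chi(G)$. The strategy is to extend $\varphi$ to a total coloring by adding only $\lfloor k/3 \rfloor+1$ or $\lceil k/3\rceil+1$ new colors (depending on the residue of $k$ modulo $3$), spending essentially one new color per triple of vertex classes.

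Partition $V_1,\ldots,V_k$ into groups of size three, with a leftover group of size $k \bmod 3 \in \{0,1,2\}$. For each group of three classes $\{V_a,V_b,V_c\}$, introduce a single new color $\gamma$ and show that the vertices of $V_a\cup V_b\cup V_c$ can be validly totally-colored using $\gamma$ together with the existing edge-color palette, after possibly performing Kempe-chain swaps on $\varphi$. Since $\gamma$ does not appear on any edge, any vertex of the group can safely receive $\gamma$ provided none of its neighbours within the group has also been given $\gamma$. The remaining vertices of the group must be assigned one of the edge colors currently missing at them; the Kempe-chain swaps are used to guarantee that at every such vertex at least one suitable edge color is available. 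Crucially, the swaps should be localized so that edges incident to vertices in other groups retain the property of having a missing color usable later.

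The chief technical obstacle is organizing these Kempe swaps so that they are simultaneously valid for all vertices in the group, compatible with the in-group use of $\gamma$, and non-interfering across groups. The treatment of the leftover group is what produces the case split in the statement: a residual group of size $0$ or $1$ is absorbed cleanly into the $\lfloor k/3\rfloor+1$ count, a residual group of size $3$ reduces to the generic step, but a residual group of size $2$ cannot be handled with a single new color and must consume an extra color, yielding the $\lceil k/3 \rceil+1$ bound when $k\equiv 2\pmod 3$. The additive $+1$ in both cases comes from one global new color reserved for vertices that cannot be safely colored by any of the per-group assignments after the swap stage.

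Finally, substituting $\chi'(G)\le \Delta(G)+1$ from Vizing's Theorem (Theorem~\ref{viz}) into the resulting inequality recovers the stated upper bound on $\chi''(G)$. The multigraph statement is obtained by running the same argument with $\chi'(G)$ in place of $\Delta(G)+1$, which works verbatim because Vizing's Theorem is only invoked at the last step and the rest of the proof uses only the existence of a proper edge coloring with $\chi'(G)$ colors.
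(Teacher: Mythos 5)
This statement is Chew~\cite{chew} and the paper does not prove it; it is cited as a known result from Chew's doctoral thesis. There is therefore no in-paper proof to compare against, so I can only assess your proposal on its own terms.

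As a standalone argument it has a genuine gap: the central step is asserted, not proved. After setting up the grouping of vertex colour classes into triples, you write that for each triple ``introduce a single new color $\gamma$ and show that the vertices of $V_a\cup V_b\cup V_c$ can be validly totally-colored using $\gamma$ together with the existing edge-color palette, after possibly performing Kempe-chain swaps.'' That sentence is the theorem; nothing in the proposal explains why such swaps exist, why they can be chosen ``simultaneously valid for all vertices in the group,'' or why they can be ``localized'' so as not to destroy the missing-colour structure needed by later groups. You even concede this yourself in the next paragraph (``The chief technical obstacle is organizing these Kempe swaps\ldots''), which is an admission that the proof has not been carried out. The back-stop ``$+1$ global new color reserved for vertices that cannot be safely colored by any of the per-group assignments'' compounds the problem: for a single extra colour to rescue all the unrecoverable vertices they would have to form an independent set across all groups, and no argument is offered for that. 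The residue bookkeeping ($\lfloor k/3\rfloor+1$ versus $\lceil k/3\rceil+1$) is the easy part and looks roughly right, but it sits on top of an unestablished lemma. Finally, a small slip at the end: Chew's bound is already stated in terms of $\chi'(G)$, so there is no ``last step'' where Vizing's theorem is substituted in; the multigraph generality comes for free once one works with $\chi'(G)$ throughout, not by a final replacement of $\Delta+1$.
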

This implies that $\chi''(G)\leq \chi'(G)+3$, for $\chi(G)\leq 5$. Although weak \textit{TCC} for \textit{class-I} $5$-colorable graphs follows from this result, weak \textit{TCC} is not known to hold for the entire class of $5$-colorable graphs till date. In Theorem~\ref{th4}, we prove the following long-pending result:
\begin{center}
    \textit{Weak \textit{TCC} is true for $5$-colorable graphs.}
\end{center}
\begin{rem}
The method used in our proof is completely different from the  proofs of Theorem~\ref{hind} and Theorem~\ref{chew}.
\end{rem}

\begin{note}
Proving \textit{TCC} for $5$-colorable graphs is likely to be a considerably harder problem. In fact, \textit{TCC} remains to be proved even for $4$-colorable graphs. As mentioned before the $\Delta=6$ case for planar graphs is still open even after decades of research. 
\end{note}

\section{Hadwiger's conjecture and total coloring}\label{sec:hadwigertcc}

\begin{defn}[total-critical graph]
A graph $G$ is said to be $t$-\textit{total-critical} if $\chi''(G)$ $=t$ and $\chi''(G- e) \leq$ $t-1$, for any edge $e$ of $G$.
\end{defn}

Note that any graph that contains at least one edge has total chromatic number at least 3.
\begin{lem}\label{lm1}
If a connected graph $H$ is $t$-total-critical, where $t\geq \Delta(H)+2$, then $H$ has no cut-vertices.
\end{lem}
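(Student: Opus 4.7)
The plan is to argue by contradiction. Suppose $H$ is $t$-total-critical with $t \geq \Delta(H)+2$, and let $v$ be a cut-vertex of $H$. Since $H$ is connected, one can decompose $H = A \cup B$ where $A$ and $B$ are edge-disjoint subgraphs, $V(A) \cap V(B) = \{v\}$, and each of $A$ and $B$ contains at least one edge incident to $v$ (as $v$ has neighbours in every component of $H - v$).

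The first step is to deduce $\chi''(A) \leq t-1$ and $\chi''(B) \leq t-1$. For any edge $e \in E(A)$, the graph $B$ is a subgraph of $H - e$, so $\chi''(B) \leq \chi''(H-e) \leq t-1$ by total-criticality; the symmetric argument using any edge of $B$ gives $\chi''(A) \leq t-1$. Fix total colorings $\phi_A$ of $A$ and $\phi_B$ of $B$, each drawn from the palette $[t-1] = \{1,\dots,t-1\}$.

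The heart of the proof is to relabel $\phi_B$ so that it agrees with $\phi_A$ at $v$ and assigns colors at the edges $E_B(v)$ that are disjoint from the colors $\phi_A$ uses on $E_A(v)$ and on $v$ itself; pasting the two colorings together then yields a total coloring of $H$ in $t-1$ colors, contradicting $\chi''(H) = t$. Write $c_A := \phi_A(v)$ and $X_A := \{\phi_A(e) : e \in E_A(v)\}$, and define $c_B$, $X_B$ analogously; then $|X_A| = d_A(v)$, $|X_B| = d_B(v)$, with $c_A \notin X_A$ and $c_B \notin X_B$. What one needs is a permutation $\pi$ of $[t-1]$ satisfying $\pi(c_B) = c_A$ and $\pi(X_B) \cap (X_A \cup \{c_A\}) = \emptyset$. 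Such a $\pi$ exists because $|[t-1] \setminus (X_A \cup \{c_A\})| = t - 2 - d_A(v) \geq d_B(v) = |X_B|$, the key inequality being $t - 2 \geq \Delta(H) \geq d_A(v) + d_B(v)$. Applying $\pi$ to $\phi_B$ and gluing it to $\phi_A$ produces the forbidden $(t-1)$-total coloring of $H$.

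There is no deep combinatorial obstacle: the construction of $\pi$ goes through with exactly one unit of slack supplied by the hypothesis $t \geq \Delta(H)+2$, and all that remains is notational care in verifying that the glued coloring is proper at $v$ and on its incident edges. It is worth noting that with only $t = \Delta(H)+1$ the counting would fail, which is consistent with the hypothesis of the lemma being sharp.
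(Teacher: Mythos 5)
Your proposal is correct and takes essentially the same approach as the paper: split $H$ at the cut-vertex $v$ into two pieces meeting only at $v$, use $t$-total-criticality to obtain $(t-1)$-total-colorings of each piece, relabel one coloring so that the two agree on $v$ and assign pairwise-distinct colors to all edges incident to $v$, and paste. The counting slack $t-2 \geq \Delta(H) \geq d_H(v)$ is invoked identically in both arguments.
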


\begin{proof}
Let $H$ be a connected $t$-total-critical graph, where $t\geq \Delta(H)+2$. 
Let us choose a vertex $v$ in $H$ with $d_{H}(v)=d$ (say). We claim that $v$ is not a cut-vertex in $H$. Otherwise if $v$ is a cut-vertex then $2\leq d\leq \Delta(H)$. We can assume that for some $s<d$, $s$ neighbours of $v$ say, $v_{1},v_{2}, \ldots,v_{s}$ lie in one connected component $C_{1}$, while the remaining $d - s$ neighbours, $v_{s+1},\ldots,v_{d}$ lie in the remaining connected components $C_{2}, \ldots ,C_{k}$ ($k\geq 2$) of $H-v$. Let the edge $vv_{i}$ be denoted by $e_{i}$, for all $i\leq d$. Define $T_{1} := H[V(C_{1})\cup\{v\}]$ and $T_{2} := H - V(C_1)$. Both $T_{1}$ and $T_{2}$ can be total colored with $(t-1)$ colors since $H$ is $t$-total-critical.

Consider a total coloring $\varphi: V(T_{1})\cup E(T_1)\rightarrow [t-1]$ of $T_1$. Note that in $\varphi$, the colors on the edges $e_1,e_2,\ldots,e_s$ and the vertex $v$ are all different. We can assume without loss of generality (by renaming the colors if necessary) that $\varphi(v)=1$ and $\varphi(e_{i})=i+1$ for each $i\in [s]$. Now consider a total coloring $\psi : V(T_{2})\cup E(T_2)\rightarrow [t-1]$ of $T_2$. As in the previous case, in $\psi$, the colors on the edges $e_{s+1},e_{s+2},\ldots,e_d$ and the vertex $v$ are all different. Again by renaming colors if necessary, we can assume that $\psi(v)=1$ and $\psi(e_{i})=i+1$ for each $i\in\{s+1,s+2,\ldots,d\}$. Notice that the renaming of colors in both $\varphi$ and $\psi$ is possible because $d+1\leq\Delta(H)+1\leq t-1$. We now construct a total coloring $\phi:V(H)\cup E(H)\rightarrow [t]$ of $H$ by combining the colorings $\varphi$ of $T_1$ and $\psi$ of $T_2$ (i.e. $\phi(v)=1$ and for every other element (vertex or edge) $x$ of $H$, $\phi(x)=\varphi(x)$ if $x$ belongs to $T_1$ and $\phi(x)=\psi(x)$ if $x$ belongs to $T_2$). It is easy to verify that $\phi$ is a total coloring of $H$ using just $t-1$ colors. This contradicts the fact that $H$ is a $t$-total-critical graph.
\end{proof}

\begin{lem}\label{lm2}
Let $G$ be a connected $(\Delta(G) + k)$-total-critical graph on at least $3$ vertices, for some $2<k\leq \Delta(G)$. Then the minimum degree of $G$ is at least $k$.
\end{lem}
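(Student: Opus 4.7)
The plan is a standard criticality argument. Suppose for contradiction that some vertex $v \in V(G)$ has degree $d := d_G(v) \le k - 1$, and set $t := \Delta(G) + k$. Pick an edge $e = vu$ incident to $v$; by $t$-total-criticality, $G - e$ admits a proper total coloring $\varphi$ using the palette $[t-1]$. The goal is to extend $\varphi$ to a proper total coloring of $G$ using only $t - 1$ colors, contradicting $\chi''(G) = t$.

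Introduce the sets of colors ``available'' at $v$ and at $u$ for placing on the missing edge $e$:
\[
M(v) := [t-1] \setminus \bigl(\{\varphi(v)\} \cup \{\varphi(f) : f \in E_G(v) \setminus \{e\}\}\bigr),
\]
and $M(u)$ defined analogously. Then $|M(v)| = t - 1 - d$ and $|M(u)| \ge t - 1 - \Delta(G) = k - 1$, and a color $c$ can legally be assigned to $e$ precisely when $c \in M(v) \cap M(u)$. First I would dispose of the easy case $M(v) \cap M(u) \ne \emptyset$: coloring $e$ with any common color completes the extension and yields the contradiction. A pigeonhole bound immediately handles $d \le k - 2$ as well, since then $|M(v)| + |M(u)| \ge (t - 1 - d) + (k - 1) \ge \Delta(G) + k > t - 1$, forcing the intersection to be non-empty.

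The delicate case is $d = k - 1$ together with $M(v) \cap M(u) = \emptyset$. Disjointness tightens the inequalities: $|M(u)| = k - 1$ (so $u$ has degree exactly $\Delta(G)$) and $M(v) \sqcup M(u) = [t-1]$. Since $\varphi(v) \notin M(v)$, this forces $\varphi(v) \in M(u)$; in other words the old color of $v$ is already available at $u$. The plan now is to free up $\varphi(v)$ at $v$ by recoloring $v$ with a new color $\gamma$ and then reuse $\varphi(v)$ on $e$. For $\gamma$ to be a valid recolor at $v$, it must lie in $M(v)$ (so it avoids every edge still incident to $v$ in $G-e$) and differ from $\varphi(w)$ for every neighbor $w$ of $v$. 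A counting argument produces such a $\gamma$: $|M(v)| = \Delta(G)$ and $v$ has at most $d = k - 1$ neighbors, so at most $k - 1$ colors from $M(v)$ are excluded, leaving
\[
|M(v) \setminus \{\varphi(w) : w \in N_G(v)\}| \ge \Delta(G) - (k - 1) \ge 1,
\]
where the last inequality invokes the hypothesis $k \le \Delta(G)$. Recolor $v$ with $\gamma$ and assign $\varphi(v)$ to $e$; I would then verify by direct inspection that the resulting assignment is a proper total coloring of $G$ with $t - 1$ colors, yielding the desired contradiction.

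The only genuine obstacle is the boundary case $d = k - 1$: the pigeonhole bound becomes tight there, and the recoloring trick at $v$ is needed precisely to break the tie. The hypothesis $k \le \Delta(G)$ enters only at the final counting step, where it guarantees the existence of a valid recolor for $v$; the condition $k > 2$ and the assumption that $G$ has at least three vertices are not needed for the extension argument itself, but ensure, together with connectedness, that the small-degree regime to which the argument is applied is non-trivial.
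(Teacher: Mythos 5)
Your overall strategy---remove the critical edge, count available colors at the two endpoints with $M(v)$ and $M(u)$, and recolor the low-degree vertex when the pigeonhole bound is tight---is essentially the same as the paper's. However, there is a genuine gap in your ``easy case.''

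You assert that whenever $M(v)\cap M(u)\neq\emptyset$, coloring $e=vu$ with a common available color completes the extension. This overlooks a separate requirement of a total coloring: the \emph{endpoints} $v$ and $u$ must themselves receive different colors, since they become adjacent vertices once $e$ is restored. In $G-e$ the vertices $v$ and $u$ are not adjacent, so the total coloring $\varphi$ of $G-e$ may well have $\varphi(v)=\varphi(u)$; in that situation no choice of color for $e$ alone repairs the coloring, and your pigeonhole argument for $d\le k-2$ does not address this. (Note that your delicate case escapes the problem, because $M(v)\sqcup M(u)=[t-1]$ forces $\varphi(v)\in M(u)$ and hence $\varphi(v)\neq\varphi(u)$; but the easy case does not give you this for free.) The paper deals with exactly this issue up front: before attempting to color the missing edge $f$, it checks whether $w$ and its neighbour $v_j$ share a color, and if so it recolors $w$ with a color $c'$ chosen to avoid the colors at $w$'s incident edges, its other neighbours, and the current color, using the bound $\Delta+k-1>2(k-2)+1$ (which relies on $k\le\Delta$) to guarantee such a $c'$ exists. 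You should add an analogous preliminary recoloring step: if $\varphi(v)=\varphi(u)$, recolor $v$ with a color in $[t-1]$ that avoids the $d-1\le k-2$ edge colors at $v$ in $G-e$ and the $d\le k-1$ colors of $v$'s neighbours in $G$; since $\Delta+k-1>2k-3$ when $k\le\Delta$, such a color exists, and after this adjustment $\varphi(v)$ differs from every neighbour color, so the rest of your argument goes through.
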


\begin{proof}
Assume that there exists a vertex $w\in G$ with $d_{G}(w)\leq k-1$, and let $\Delta(G)=\Delta$. Since $G$ has at least three vertices and there are no cut-vertices in $G$ (Lemma \ref{lm1}), $w$ cannot have degree $1$. 

Let the neighbours of $w$ be $v_{1},v_{2}, \ldots ,v_{j}$, where $2\leq j\leq k-1$. Call the edge $v_{i}w$ as $e_{i}$, for each $i\in [j-1]$ and $v_{j}w$ as $f$. Then $G-f$ has a total coloring using $\Delta+k-1$ colors, which we shall assume to be the set $[\Delta+k-1]$.

Assume that $v_{i}$ is assigned color $\alpha_{i}$ for each $i\in [j]$ and the edge $e_{i}$ is assigned color $\beta_{i}$ for each $i\in [j-1]$. Call the color assigned to $w$ as $c$.

If $\alpha_{j} = c$, pick a color $c'\in \{1, 2,\ldots ,\Delta+k-1\}\setminus(\{\alpha_{i}: i\in [j-1]\}\cup\{ \beta_{i} : i\in [j-1]\}\cup\{c\}$) (such a $c'$ exists because $k\leq \Delta$ implies $\Delta+k-1 > 2(k-2)+1$), and assign it to the vertex $w$.  Now, the vertices $w$ and $v_{j}$ have different colors on them. Else if $\alpha_{j}\neq c$, let $c':=c$.

Since $d_{G}(v_{j})\leq\Delta$, and therefore $d_{G-f}(v_j)\leq\Delta-1$, there exists a list $L$ of at least $(k-1)$ colors that are not assigned to $v_{j}$ or its incident edges. If at least one of the colors in $L$ is not in $\{c'\}\cup\{\beta_{i}: i\in [j-1]\}$, assign it to the edge $f$ giving $G$ a $(\Delta+k-1)$-total coloring. Otherwise since $\lvert L \rvert\geq k-1$ and $w$ has at most $k-2$ incident edges in $G-f$, we have the set of colors that appear on $w$ and its incident edges is exactly the list $L$; that is, $\{c'\}\cup\{\beta_{i}: i\in [j-1]\}=L$. Choose a color $c''\in$ $\{1,2,\ldots,\Delta+k-1\}\setminus (\{c',\alpha_{j}\}\cup\{\alpha_{i}:i\in [j-1]\}\cup\{\beta_{i}:i\in [j-1]\})$ (such a $c''$ exists as $j\leq k-1 < \Delta$ implies $\Delta+k-1 > 2(k-2) +2$), and assign it to the vertex $w$ and then assign color $c'$ to the edge $f$, giving a $(\Delta+k-1)$-total coloring of $G$.

This is a contradiction as $\chi''(G)=\Delta+k$ by hypothesis. Hence, $d_{G}(w)\geq k$ for each vertex $w\in G$.
\end{proof}

\begin{thm}\label{th1}
Let $\mathcal{F}$ be a class of graphs that is closed under the operation of taking subgraphs. If \textit{TCC} is true for $\mathcal{F}$, then Hadwiger's conjecture holds for the class $T(\mathcal{F})$.
\end{thm}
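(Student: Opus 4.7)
The plan is to distinguish cases on $\chi''(G)$. By TCC applied to $G\in\mathcal F$, we always have $\chi(T(G))=\chi''(G)\le\Delta(G)+2$. If $\chi''(G)\le\Delta(G)+1$, then Observation~\ref{ob2} gives $\eta(T(G))\ge\omega(T(G))\ge\Delta(G)+1\ge\chi(T(G))$ and we are done immediately. So the only substantive case is $\chi''(G)=\Delta(G)+2$, where the goal becomes exhibiting a $K_{\Delta(G)+2}$-minor inside $T(G)$.

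I would first pass to a critical subgraph. Take $H$ to be an edge-minimal subgraph of $G$ satisfying $\chi''(H)=\Delta(G)+2$; a routine reduction shows $H$ may be taken to be connected, and by minimality $H$ is $(\Delta(G)+2)$-total-critical. The crucial step is this: since $\mathcal F$ is closed under subgraphs, $H\in\mathcal F$, and TCC applied to $H$ yields $\chi''(H)\le\Delta(H)+2$. Combined with $\chi''(H)=\Delta(G)+2$ and $\Delta(H)\le\Delta(G)$, this pins down $\Delta(H)=\Delta(G)$. Thus $H$ is a connected $(\Delta(H)+2)$-total-critical graph, and Lemma~\ref{lm1} implies $H$ has no cut-vertex.

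With this $H$ I would construct the minor explicitly inside $T(H)\subseteq T(G)$. Let $v$ be a vertex of degree $\Delta:=\Delta(H)$, with neighbours $u_1,\dots,u_\Delta$ and incident edges $e_i=vu_i$. The singleton branch sets $\{v\},\{e_1\},\dots,\{e_\Delta\}$ realise the natural $(\Delta+1)$-clique of $T(H)$ from Observation~\ref{ob2}. For the extra branch set, choose any spanning tree $T'$ of $H-v$ — which exists because $v$ is not a cut-vertex — and set $B_{\Delta+1}=V(T')\cup E(T')$. One checks that $B_{\Delta+1}$ is connected in $T(H)$ (it is precisely the vertex set of $T(T')$); it is disjoint from $v$ and from every $e_i$ because $T'$ lies inside $H-v$; and each $u_i$ belongs to $V(T')\subseteq B_{\Delta+1}$, providing an edge of $T(H)$ from $B_{\Delta+1}$ to $\{v\}$ and to each $\{e_i\}$. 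Hence every pair of branch sets is joined, $T(H)$ has a $K_{\Delta+2}$-minor, and this transfers to $T(G)$.

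The main step to get right is the reduction forcing $\Delta(H)=\Delta(G)$: this is exactly where the hypothesis that $\mathcal F$ is closed under subgraphs is used. Without it, a minimal critical subgraph could have $\Delta(H)<\Delta(G)$, and the construction would yield only a $K_{\Delta(H)+2}$-minor, too small to match $\chi(T(G))=\Delta(G)+2$. Once $\Delta(H)=\Delta(G)$ and $H$ has no cut-vertex (Lemma~\ref{lm1}), the spanning-tree trick to produce the $(\Delta+2)$-nd branch set is the natural way to upgrade the obvious $(\Delta+1)$-clique in $T(H)$ to a clique minor of size $\Delta+2$.
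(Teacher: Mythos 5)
Your proposal is correct and follows essentially the same route as the paper: split on $\chi''(G)$, dispose of $\chi''(G)\le\Delta+1$ by Observation~\ref{ob2}, pass to a $(\Delta+2)$-total-critical connected subgraph $H$, use closure under subgraphs plus TCC to force $\Delta(H)=\Delta(G)$, invoke Lemma~\ref{lm1} for the absence of cut-vertices, and contract a connected set covering all $v$-vertices of $H-v_\Delta$ to form the $(\Delta+2)$nd branch set. The only cosmetic difference is that you contract $V(T')\cup E(T')$ for a spanning tree $T'$ of $H-v_\Delta$, whereas the paper contracts the full element set of $H-v_\Delta$; both work identically since what matters is that the branch set is connected in $T(H)$, disjoint from $\{v_\Delta,e_1,\dots,e_\Delta\}$, and contains every neighbour $u_i$.
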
  

\begin{proof}
Assume that \textit{TCC} holds for all graphs in $\mathcal{F}$. Let $G\in$ $\mathcal{F}$ and $\Delta :=$ $\Delta(G)$. Therefore by Observation~\ref{ob2}, $\Delta + 1\leq$ $\chi''(G)$ $\leq \Delta + 2$. Note that the last inequality follows because \textit{TCC} holds for $\mathcal{F}$. 
\vspace{3mm}\newline
\textbf{Case 1:} $\chi''(G)$ $=\Delta+1$.
\vspace{1mm}\newline
$T(G)$ contains a clique of size $\Delta+1$ (Observation~\ref{ob2}). Therefore, this case is trivial.
\vspace{3mm}\newline
\textbf{Case 2:} $\chi''(G)$ $=\Delta+2$.
\vspace{1mm}\newline
Let $H$ be a $(\Delta+2)$-total critical subgraph of $G$. We then have $\chi''(H) =\Delta + 2$. It is clear that $\Delta(H)=\Delta$, otherwise since $H$ also belongs to $\mathcal{F}$, \textit{TCC} would imply that $\chi''(H)\leq \Delta + 1$. Choose a vertex $v_{\Delta}\in H$ such that $d_{H}(v_{\Delta})=\Delta$. Let $e_1,e_2,\ldots,e_{\Delta}$ be the edges incident on $v_{\Delta}$ in $H$. From Lemma~\ref{lm1} we know $H-v_{\Delta}$ is connected. This implies that the subgraph $X$ of $T(G)$ induced by the elements of $H-v_{\Delta}$ is connected. In $T(G)$, contract the subgraph $X$ into a single vertex $w$. Since the subgraph $X$ contains the \textit{v}-vertex corresponding to every neighbour of $v_\Delta$, the vertex $w$ is now adjacent to the \textit{e}-vertices $e_{1},\ldots,e_{\Delta}$ and the \textit{v}-vertex $v_{\Delta}$, forming a $(\Delta+2)$-clique minor in $T(G)$.
\end{proof}

\section{Weak total coloring conjecture} \label {sec:weaktcc}

We may state weaker versions of Conjecture~\ref{tcc} by relaxing the upper bound on $\chi''(G)$ as follows :

\begin{conj}[$(k)$-\textit{TCC}]
Let $k\geq2$ be a fixed positive integer. For any graph $G$, \[ \chi''(G)\leq \Delta(G) + k \]
\end{conj}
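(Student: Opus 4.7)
The statement is the family of conjectures $(k)$-\textit{TCC}, parameterized by $k \geq 2$. Since the case $k = 2$ reduces to the classical Total Coloring Conjecture --- open for over fifty years and widely considered one of the hardest problems in graph coloring --- there is no proof I can sketch that covers the whole family. What is reasonable is to describe the plan one would adopt if attacking $(k)$-\textit{TCC} for a specific fixed $k$, using the critical-graph machinery the paper has just set up in Lemmas~\ref{lm1} and~\ref{lm2}.

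The natural plan is a minimal-counterexample argument. Fix $k$ and suppose $(k)$-\textit{TCC} fails; choose a counterexample $G$ with $|V(G)|+|E(G)|$ minimum. Then $\chi''(G) = \Delta(G)+k+1$ and $G$ is $(\Delta(G)+k+1)$-total-critical. The aim is to extract progressively sharper structural restrictions on $G$: by Lemma~\ref{lm1}, $G$ has no cut-vertices, and by Lemma~\ref{lm2} (applied with parameter $k+1$), the minimum degree of $G$ is at least $k+1$. The next step is to develop Vizing-style adjacency lemmas in the total-coloring setting --- for instance, that any vertex of low degree must be adjacent to many vertices of degree exactly $\Delta(G)$ --- and feed these into a global counting or discharging argument that rules out the existence of $G$. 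One might supplement this with hypotheses on $\chi(G)$ and imitate the palette-juggling arguments behind Theorems~\ref{hind} and~\ref{chew}, or appeal to Theorem~\ref{molloy} to dispose of the case where $\Delta(G)$ is large.

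The hard step is precisely the one the authors themselves flag: their technique for building a $(\Delta+k)$-clique minor in $T(G)$ from total-criticality ``does not seem to work when $k \geq 4$.'' As $k$ grows, the extra color slack makes local recoloring swaps succeed too easily, and the criticality hypothesis ceases to pin $G$ down tightly. Even for $k = 3$ (weak \textit{TCC}) the general conjecture is unresolved; Theorem~\ref{th4} below only handles the subcase $\chi(G) \leq 5$. At the other end, for $k \geq C$ where $C$ is the Molloy--Reed constant from Theorem~\ref{molloy}, the conjecture is already known unconditionally. The genuinely open regime is therefore the narrow band of small $k$, with $k=2$ being the original \textit{TCC}; any full resolution for $k \geq 4$ but below $C$ seems to require ideas beyond the criticality-plus-recoloring framework --- probabilistic colorings in the Molloy--Reed style, or an exploitation of the total-graph structure at a more global level.
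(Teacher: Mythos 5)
You correctly recognize that this item is stated in the paper as a \emph{conjecture}, not a theorem: the paper offers no proof of $(k)$-\textit{TCC} for general $k$ (indeed $k=2$ is the original Total Coloring Conjecture and $k=3$ is still open), and only uses it as a hypothesis (e.g.\ in Theorem~\ref{th2} and its corollaries) or establishes special cases (Theorem~\ref{th4}). Your account of the critical-graph machinery and the obstruction at $k\geq 4$ is consistent with what the paper itself says, so there is no gap to report --- there is simply no proof in the paper to compare against.
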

\medskip

\begin{note}
$(2)$-\textit{TCC} is the same as Conjecture \ref{tcc} ; that is, the total coloring conjecture. We shall refer to $(3)$-\textit{TCC} as the \emph{weak total coloring conjecture} or \emph{weak-\textit{TCC}} for short.
\end{note}
\medskip

We first make a general observation about graphs that do not contain cut-vertices.
\begin{claim}\label{cl1}
Suppose that $G$ is a 2-connected graph and $v$ is a vertex in $G$.
Then there exists a vertex $w\in N(v)$ such that $\{w,v\}$ is not a separator of $G$.
\end{claim}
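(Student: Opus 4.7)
The plan is to reduce the problem to finding a neighbor of $v$ that is not a cut vertex of $G - v$, and then extract such a vertex from a leaf block of the block-cut tree of $G-v$.

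First I would observe the equivalence: since $G$ is $2$-connected, $G - v$ is connected, so for any $w \in N(v)$ the graph $G - \{v,w\}$ is disconnected exactly when $w$ is a cut vertex of $G - v$. Thus proving the claim reduces to exhibiting some $w \in N(v)$ that is not a cut vertex of $G - v$. If $G - v$ has no cut vertex at all, any neighbor of $v$ works, so assume $G - v$ has at least one cut vertex and consider its block-cut tree.

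Pick a leaf block $B$ of this tree, and let $c$ be the unique cut vertex of $G - v$ lying in $B$. Every vertex of $B \setminus \{c\}$ is a non-cut-vertex of $G - v$, so it suffices to show that $v$ has at least one neighbor in $B \setminus \{c\}$. Here I would use the $2$-connectedness of $G$: since $B$ is a leaf block of $G - v$, in $G - v$ the set $B \setminus \{c\}$ has neighbors only inside $B$, so in $G$ itself the only possible neighbors of vertices in $B \setminus \{c\}$ outside $B$ are $v$ and $c$. Consequently, if $v$ had no neighbor in $B \setminus \{c\}$, then $\{c\}$ would separate $B \setminus \{c\}$ from the rest of $G$, contradicting the $2$-connectedness of $G$ (note $B \setminus \{c\} \neq \emptyset$, because any block of a graph on at least two vertices contains at least two vertices). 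Therefore $v$ has a neighbor $w \in B \setminus \{c\}$, and this $w$ is the vertex we want.

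The main obstacle I anticipate is just being careful about the degenerate cases: ensuring that the block $B$ contains a vertex other than $c$ (handled by the fact that blocks in a connected graph on $\geq 2$ vertices have at least two vertices, the case $|V(G)|=2$ being trivial), and cleanly justifying that the only edges from $B \setminus \{c\}$ to $V(G) \setminus B$ in $G$ can go to $v$ or $c$, which follows directly from $B$ being a leaf block of $G-v$ together with $c$ being its unique cut vertex. Beyond that the argument is a short structural deduction.
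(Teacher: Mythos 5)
Your proof is correct, and it takes a genuinely different route from the paper's. Both arguments begin from the same reduction (explicit in your write-up, implicit in the paper's): since $G$ is $2$-connected, $G-v$ is connected, and $\{v,w\}$ separates $G$ precisely when $w$ is a cut vertex of $G-v$, so it suffices to find a $w\in N(v)$ that is not a cut vertex of $G-v$. From there the two proofs diverge. The paper assumes for contradiction that every $\{v,w\}$ with $w\in N(v)$ is a (necessarily minimum) separator, picks $w$ minimizing the size of the smallest component $S$ of $G-\{v,w\}$, and then, using a neighbour $x$ of $v$ in $S$, shows that $G-\{w,x\}$ has a component strictly smaller than $S$ --- an extremal/descent argument carried out directly on $2$-separators. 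You instead invoke the block--cut tree of $G-v$, take a leaf block $B$ with its unique cut vertex $c$, observe that every vertex of $B\setminus\{c\}$ is a non-cut-vertex of $G-v$, and use $2$-connectedness of $G$ to force $v$ to have a neighbour in $B\setminus\{c\}$ (else $c$ would be a cut vertex of $G$). Your degenerate-case checks ($B\setminus\{c\}\neq\emptyset$, the case of no cut vertex in $G-v$) are handled correctly. The paper's argument has the advantage of being entirely self-contained, requiring no block-decomposition machinery; yours is arguably more transparent and leans on a standard structural tool that makes the ``why'' clearer. Both are sound.
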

\begin{proof}
Suppose for contradiction that each set in $\{\{v,w\}\}_{w\in N(v)}$ is a separator of $G$. Since $G$ has no cut-vertices, we can conclude that each set in $\{\{v,w\}\}_{w\in N(v)}$ is a minimum separator of $G$. For $w\in N(v)$, let $F(v,w)$ be the number of vertices in the smallest component in $G-\{v,w\}$. Choose $w\in N(v)$ such that $F(v,w)$ is as small as possible. Let $S$ be the smallest component in $G-\{v,w\}$. Since $\{v,w\}$ is a minimum separator, $v$ has a neighbour in each component of $G-\{v,w\}$, and so also in $S$. Let $x\in S\cap N(v)$. Now consider the graph $G-\{w,x\}$. Since $w$ had a neighbour in each connected component of $G-\{v,w\}$ (as $\{v,w\}$ is a minimum separator), all the components of $G-\{v,w\}$, together with $w$, now form one connected component of $G-\{w,x\}$. Thus the other connected components of $G-\{w,x\}$ must all be subgraphs of $S$. Since there is at least one other connected component and it cannot contain $x\in S$, we can conclude that there is a connected component of $G-\{w,x\}$ that is smaller than $S$. This contradicts the choice of $w$ and $S$.
\end{proof}

\begin{lem}\label{newlemma}
Let $H$ be a $(\Delta+3)$-total-critical graph having maximum degree $\Delta$. Then $T(H)$ contains a clique minor of order $\Delta+3$.
\end{lem}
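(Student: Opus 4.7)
The plan is to extend the construction from Theorem~\ref{th1}, Case~2, by exhibiting two additional branch sets beyond the natural $(\Delta+1)$-clique around a maximum-degree vertex. Since Theorem~\ref{tcd} confirms TCC for $\Delta \le 5$, the hypothesis $\chi''(H) = \Delta+3$ forces $\Delta \ge 6$; in particular $\Delta > 2$, so Lemmas~\ref{lm1} and~\ref{lm2} (with $k=3$) apply and $H$ is $2$-connected with minimum degree at least $3$. Fix a vertex $v_\Delta$ of degree $\Delta$ with neighbours $v_1,\ldots,v_\Delta$ and edges $e_i = v_\Delta v_i$. Apply Claim~\ref{cl1} at $v_\Delta$ and relabel so that $H - \{v_\Delta,v_1\}$ is connected. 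Take the base branch sets $B_0 := \{v_\Delta\}$ and $B_i := \{e_i\}$ for $i\in[\Delta]$; by Observation~\ref{ob2} these $\Delta+1$ singletons induce a $K_{\Delta+1}$ in $T(H)$.

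\textbf{Two extra branch sets.} Because $\deg(v_1) \ge 3$, we can pick $w \in N_H(v_1)\setminus\{v_\Delta\}$ and form a spanning tree $T^*$ of $H - v_\Delta$ by taking any spanning tree of the connected graph $H-\{v_\Delta,v_1\}$ and attaching $v_1$ through the single edge $v_1 w$; thus $v_1$ is a leaf of $T^*$. Define
\[
Y := \{v_1\} \cup E(T^*), \qquad Z := \bigl(V(H)\setminus\{v_\Delta,v_1\}\bigr) \cup \bigl(E(H - v_\Delta)\setminus E(T^*)\bigr),
\]
viewing all sets as subsets of vertices of $T(H)$. Thus $Y$ collects the v-vertex $v_1$ and the e-vertices of the tree edges of $T^*$, while $Z$ collects everything else outside $B_0\cup B_1\cup\cdots\cup B_\Delta$.

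\textbf{Verification and the key difficulty.} I would then check that $B_0,B_1,\ldots,B_\Delta,Y,Z$ are pairwise disjoint (clear), connected in $T(H)$, and pairwise adjacent. Connectivity of $Y$ follows since adjacent tree edges of $T^*$ share a vertex in $H$ (so their e-vertices are adjacent in $T(H)$) and $v_1$ is adjacent to the e-vertex $v_1 w$. Connectivity of $Z$ follows because its v-vertex part induces $H - \{v_\Delta,v_1\}$ (connected by Claim~\ref{cl1}) and every e-vertex of $Z$ is incident in $H$ to at least one v-vertex of $V(H)\setminus\{v_\Delta,v_1\}$. To be adjacent to $e_i$ a branch set must intersect the ``star'' $S_i := \{v_i\}\cup(E_H(v_i)\setminus\{e_i\})$: $Y$ meets $S_1$ via $v_1$, and for $i\ge 2$ via the tree edge of $T^*$ incident to $v_i$; $Z$ meets $S_i$ via $v_i$ for $i\ge 2$, and meets $S_1$ via any edge of $H-v_\Delta$ at $v_1$ that is outside $T^*$. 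Adjacency between $Y$ and $Z$ is witnessed by the v-v edge $v_1 w$ with $v_1\in Y$, $w\in Z$. The one delicate point, and the place where the construction would otherwise collapse, is the requirement that $v_1$ be a leaf of $T^*$: if all edges at $v_1$ in $H-v_\Delta$ ended up in $E(T^*)\subseteq Y$, then $Z$ would contain no element of $S_1$ and hence fail to be adjacent to $e_1$. The min-degree-$3$ conclusion of Lemma~\ref{lm2} is exactly what guarantees that $v_1$ has at least two edges in $H-v_\Delta$, so one can always be withheld from $T^*$; this is the heart of the argument.
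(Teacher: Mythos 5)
Your proof is correct, and it is a genuinely different construction from the paper's. Both begin the same way: reduce to $H$ connected, invoke Theorem~\ref{tcd} to get $\Delta\ge 6$, use Lemma~\ref{lm1} plus Claim~\ref{cl1} to find a neighbour of $v_\Delta$ whose removal together with $v_\Delta$ leaves $H$ connected, and invoke Lemma~\ref{lm2} to get minimum degree $\ge 3$. After that the two constructions diverge. The paper merges $v_\Delta$ with its designated neighbour into a single two-vertex branch set, keeps the $\Delta$ e-vertices of $E_H(v_\Delta)$ as singletons, and then builds two more branch sets from $H-\{v_\Delta,w\}$ by taking \emph{all} e-vertices of $E(H-\{v_\Delta,w\})$ joined to one anchor edge $e'$ at $w$, and \emph{all} v-vertices of $V(H-\{v_\Delta,w\})$ joined to a second anchor edge $e''$ at $w$; the remaining edges at $w$ are simply discarded. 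You instead keep $\{v_\Delta\}$ as a singleton and use a spanning tree $T^*$ of $H-v_\Delta$ with $v_1$ forced to be a leaf to carve $T(H)\setminus(B_0\cup\cdots\cup B_\Delta)$ into exactly two connected pieces $Y$ and $Z$, producing a spanning partition of $V(T(H))$ into $\Delta+3$ branch sets. The leaf condition is indeed the crux: it is what withholds at least one edge of $H-v_\Delta$ at $v_1$ from $T^*$ so that $Z$ reaches $e_1$, and your appeal to Lemma~\ref{lm2} for that is exactly right. Your approach is a bit closer in spirit to the spanning-tree technique the paper later uses in Theorem~\ref{th2}; the paper's approach in Lemma~\ref{newlemma} is more ad hoc (two anchor edges $e',e''$) but avoids any tree construction. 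Both are sound; yours is arguably cleaner as a single uniform decomposition, while the paper's is shorter to verify because adjacency of the two big branch sets to $S^1$ is immediate from spanning $H-\{v_\Delta,w\}$ rather than needing the tree/non-tree split. One cosmetic remark: your set $S_i$ is not the full $T(H)$-neighbourhood of $e_i$ (that also contains $v_\Delta$ and the other e-vertices at $v_\Delta$), but since $Y$ and $Z$ are disjoint from $B_0\cup\cdots\cup B_\Delta$ this restricted $S_i$ is the correct target for them, so the argument is unaffected.
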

\begin{proof}
Recall that if $H$ is not connected, then $H$ contains at most one connected component that is not an isolated vertex. Since the maximum degree of this component is also $\Delta$ and it is also $(\Delta+3)$-total-critical, we will be done if we prove the statement of the lemma for this component. So from here on, we assume that $H$ is connected.

Choose a vertex $v_{\Delta}$ in $H$, with $d_{H}(v_{\Delta})=\Delta$.
By Lemma~\ref{lm1}, $H$ does not contain any cut-vertices. Then by Claim~\ref{cl1}, there exists a $w\in N_{H}(v_{\Delta})$, such that $\{v_{\Delta},w\}$ is not a separator in $H$.
Recall that \textit{TCC} holds for all graphs with maximum degree at most $5$ (Theorem~\ref{tcd}). Therefore, $\Delta\geq 6$ since $\chi''(H)=\Delta+3$ by assumption. It follows from Lemma~\ref{lm2} that degree of $w$ in $H$ is at least $3$, that is, there are at least two edges between $w$ and $H\setminus\{v_{\Delta},w\}$.
Let $e'$ and $e''$ be the \textit{e}-vertices in $T(H)$ corresponding to these two edges. 

Our goal is to construct a clique minor of order $\Delta+3$ ($K_{\Delta+3}$-minor) in $T(H)$. For this we use the following \textit{branch sets} :
\begin{enumerate}
\item The set $S^{0}$ consisting of the \textit{v}-vertices that correspond to $\{v_{\Delta},w\}$. Note that $S^{0}$ is connected in $T(H)$ and can be contracted to a vertex $z$.
\item The singleton $\{e\}$ for each \textit{e}-vertex $e\in T(H)$ that corresponds to an edge in $E_{H}(v_{\Delta})$. Let $S^{1}$ denote the set of these $\Delta$ \textit{e}-vertices.
\item  $S^{2}=S'\cup\{ e'\}$, where $S'$ is the set of \textit{e}-vertices corresponding to the edges in $E(H\setminus \{v_{\Delta},w\})$. Note that $S^{2}$ is connected and can be contracted to a single vertex $y$.
\item  $S^{3}=S''\cup\{e''\}$, where $S''$ is the set of \textit{v}-vertices corresponding to the vertices in $V(H\setminus \{v_{\Delta},w\})$. $S^{3}$ is also connected and can be contracted to a single vertex $x$.
\end{enumerate}
Now we show that $S^{1}\cup\{x\}\cup\{y\}\cup\{z\}$ forms the desired clique of order $\Delta+3$ in the resulting graph.

Let the \textit{e}-vertex corresponding to the edge $v_{\Delta}w$ be $f$. First, $(S^{1}\setminus\{f\})\cup\{x\}\cup\{y\}$ forms a clique of order $\Delta+1$ since $H\setminus\{v_{\Delta},w\}$ is connected and contains end-points of all edges incident on $v_{\Delta}$ other than $v_{\Delta}w$.

Now obviously $f$ is adjacent to each vertex in $S^{1}\setminus\{f\}$ and also to $e'$ and $e''$ in $T(H)$. It follows that $f$ is adjacent to $x$, $y$ and each member of $S^{1}\setminus\{f\}$.

Now $z$ is obviously adjacent to all vertices in $S^{1}$. Moreover $z$ is adjacent to $x$, since $e''$ is adjacent to $w$ in $T(H)$, and adjacent to $y$, since $e'$ is adjacent to $w$ in $T(H)$. 
\end{proof}
\begin{thm}\label{tcc3}
Let $\mathcal{F}$ be a class of graphs that is closed under the operation of taking subgraphs. Then, Hadwiger's conjecture holds for the class $T(\mathcal{F})$, if weak \textit{TCC} is true for graphs in $\mathcal{F}$.
\end{thm}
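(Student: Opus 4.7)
The plan is to show that for every $G\in\mathcal{F}$, the total graph $T(G)$ contains a clique minor of order $\chi''(G)=\chi(T(G))$; this is precisely HC for $T(G)$. Since weak TCC gives $\chi''(G)\leq\Delta(G)+3$ and Observation~\ref{ob2} forces $\chi''(G)\geq\Delta(G)+1$, I will split on the exact value of $\chi''(G)\in\{\Delta(G)+1,\Delta(G)+2,\Delta(G)+3\}$. The first case is immediate: Observation~\ref{ob2} already exhibits a clique of order $\Delta(G)+1$ in $T(G)$.

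For the remaining two cases I will extract a $\chi''(G)$-total-critical subgraph $H$ of $G$; closure of $\mathcal{F}$ under subgraphs keeps $H\in\mathcal{F}$, and $T(H)$ sits inside $T(G)$, so any clique minor built in $T(H)$ lifts to $T(G)$. The central trick is to use weak TCC to pin down $\Delta(H)$: since $\chi''(H)=\chi''(G)$ and weak TCC applied to $H\in\mathcal{F}$ gives $\chi''(H)\leq\Delta(H)+3$, we obtain $\Delta(H)\geq\chi''(G)-3$.

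If $\chi''(G)=\Delta(G)+3$, this pinning together with $\Delta(H)\leq\Delta(G)$ forces $\Delta(H)=\Delta(G)$, so $H$ is a $(\Delta(H)+3)$-total-critical graph of maximum degree $\Delta(H)$. Lemma~\ref{newlemma} then produces the required clique minor of order $\Delta(H)+3=\Delta(G)+3$ inside $T(H)$. If $\chi''(G)=\Delta(G)+2$, weak TCC only pins $\Delta(H)\in\{\Delta(G)-1,\Delta(G)\}$, and I will split a second time. When $\Delta(H)=\Delta(G)$, $H$ is $(\Delta(H)+2)$-total-critical and the construction from Case~2 of Theorem~\ref{th1} applies verbatim: pick a max-degree vertex $v_{\Delta}\in H$, use Lemma~\ref{lm1} to conclude $H-v_{\Delta}$ is connected, contract the subgraph of $T(H)$ induced by the elements of $H-v_{\Delta}$ to a single vertex $w$, and observe that $w$ together with $v_{\Delta}$ and the $\Delta$ e-vertices incident to $v_{\Delta}$ forms a $(\Delta(G)+2)$-clique after contraction. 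When $\Delta(H)=\Delta(G)-1$, $H$ is $(\Delta(H)+3)$-total-critical of maximum degree $\Delta(H)$, so Lemma~\ref{newlemma} again yields a clique minor of order $\Delta(H)+3=\Delta(G)+2$ in $T(H)$.

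The only real obstacle is the bookkeeping in the $\chi''(G)=\Delta(G)+2$ case, where the maximum degree of the critical subgraph can drop by one, and we must correctly route each possibility to either Theorem~\ref{th1}'s construction or Lemma~\ref{newlemma}. All the substantive structural content, namely the no-cut-vertex property, the non-separating neighbour $w$ with $d_H(w)\geq 3$, and the four branch sets, has already been absorbed into Lemma~\ref{newlemma} (via Lemma~\ref{lm1} and Claim~\ref{cl1}), so what remains is essentially a case analysis that funnels each possibility into one of two pre-built constructions, each of which contributes a clique minor in $T(H)\subseteq T(G)$ of order exactly $\chi''(G)=\chi(T(G))$.
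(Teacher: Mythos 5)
Your proposal is correct and follows essentially the same route as the paper: split on $\chi''(G)\in\{\Delta+1,\Delta+2,\Delta+3\}$, pass to a total-critical subgraph $H\in\mathcal{F}$, use weak TCC to constrain $\Delta(H)$, and then invoke Lemma~\ref{newlemma} (when $H$ is $(\Delta(H)+3)$-total-critical) or the contraction from Theorem~\ref{th1}'s Case 2 (when $H$ is $(\Delta(H)+2)$-total-critical). The sub-case split for $\chi''(G)=\Delta+2$ according to whether $\Delta(H)=\Delta$ or $\Delta(H)=\Delta-1$ is exactly the paper's argument.
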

\begin{proof}
Let $G$ be a graph in $\mathcal{F}$. Let $\Delta=\Delta(G)$.
Since weak \textit{TCC} holds for $G$, we know that $\chi''(G)\leq\Delta+3$. Together with Observation \ref{ob2}, this implies that $\chi''(G)\in\{\Delta+1,\Delta+2,\Delta+3\}$.
\vspace{3mm}\newline
\textbf{Case 1:} $\chi''(G)=\Delta+3$
\vspace{1mm}\newline
Take a $(\Delta+3)$-total-critical subgraph $H$ of $G$.
Clearly, $\Delta(H) =\Delta$; otherwise if $\Delta(H)<\Delta$, since $H$ also belongs to $\mathcal{F}$, $H$ would have a $(\Delta+2)$-total coloring by weak \textit{TCC}, which is a contradiction. Now by Lemma~\ref{newlemma}, $T(H)$ contains a clique minor of order $\Delta+3$. As $T(H)$ is a subgraph of $T(G)$, we are done.
\vspace{3mm}\newline
\textbf{Case 2:} $\chi''(G)=\Delta+2$
\vspace{1mm}\newline
Let $H$ be a $(\Delta+2)$-total critical subgraph of $G$. Since $H$, being a subgraph of $G$, belongs to $\mathcal{F}$, and therefore satisfies weak \textit{TCC}, we have that $\Delta(H)\geq\Delta-1$. As observed earlier, we assume that $H$ is connected, since if not, we can just take the only component of $H$ that is not an isolated vertex to be $H$.
If $\Delta(H) = \Delta$, then choose a vertex $v_\Delta$ of $H$ such that $d_H(v_\Delta)=\Delta$. By Lemma~\ref{lm1}, we know that $H-v_{\Delta}$ is connected. Then in $T(H)$, we can contract the $v$-vertices corresponding to vertices in $V(H)-v_\Delta$ to a single vertex $z$ (as in the proof of Theorem~\ref{th1}) so that the \textit{e}-vertices corresponding to the edges incident on $v_{\Delta}$, the \textit{v}-vertex corresponding to $v_{\Delta}$, and the vertex $z$ form a clique of order $\Delta+2$ in the resulting graph.
On the other hand, if $\Delta(H)=\Delta-1$, then since $H$ is $(\Delta(H)+3)$-total-critical, we have by Lemma~\ref{newlemma} that $T(H)$ contains a clique minor of order $(\Delta-1)+3=\Delta+2$.
\vspace{3mm}\newline
\textbf{Case 3:} $\chi''(G)=\Delta+1$
\vspace{1mm}\newline
As we can see from Observation \ref{ob2}, this case is trivial. 
\end{proof}

\begin{corol}
Hadwiger's conjecture holds true for total graphs of all planar graphs.
\end{corol}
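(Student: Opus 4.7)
The plan is to obtain this corollary as an immediate application of Theorem~\ref{tcc3}, with $\mathcal{F}$ taken to be the class $\mathcal{P}$ of all planar graphs. To invoke Theorem~\ref{tcc3} I need to verify its two hypotheses for $\mathcal{P}$: closure under taking subgraphs, and validity of weak \textit{TCC}.

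Closure under subgraphs is immediate, since deleting vertices or edges from a planar embedding yields a planar embedding; so every subgraph of a planar graph is planar. For weak \textit{TCC} on $\mathcal{P}$, the key input is Theorem~\ref{planar}: for any planar graph $G$, if $\Delta(G) \neq 6$ then $\chi''(G) \le \Delta(G)+2$, and if $\Delta(G)=6$ then $\chi''(G) \le \Delta(G)+3$. In either case $\chi''(G) \le \Delta(G)+3$, which is exactly the statement of weak \textit{TCC} on $\mathcal{P}$.

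With both hypotheses verified, Theorem~\ref{tcc3} applies directly and yields that Hadwiger's conjecture holds for every graph in $T(\mathcal{P})$, i.e., for the total graph of every planar graph. There is no obstacle here beyond assembling the two ingredients; the corollary really is a one-line consequence of Theorem~\ref{tcc3} together with the long-established bound of Theorem~\ref{planar}. The only mild subtlety worth flagging in the write-up is why the $\Delta(G)=6$ case of Theorem~\ref{planar}, where \textit{TCC} itself is not known, is nevertheless enough: the conclusion we need is $\chi''(G) \le \Delta(G)+3$, not $\Delta(G)+2$, and Theorem~\ref{planar} delivers precisely this.
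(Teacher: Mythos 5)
Your proof is correct and follows exactly the paper's approach: the paper's own proof is the one-liner ``Follows from Theorems~\ref{planar} and \ref{tcc3},'' and your write-up simply fills in the routine verifications (planarity is subgraph-closed; Theorem~\ref{planar} gives $\chi''(G)\le\Delta(G)+3$ in all cases, including $\Delta(G)=6$).
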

\begin{proof}
Follows from Theorems~\ref{planar} and \ref{tcc3}.
\end{proof}

Given a graph $G$ with a list $L_{e}$ of colors assigned to each edge $e$, a \emph{list edge coloring} is a proper coloring of $E(G)$ such that each edge $e$ is assigned a color from the list $L_{e}$. 
A graph $G$ is said to be $k$-\emph{edge-choosable} if a list edge coloring of $G$ exists for any assignment of lists of size $k$ to the edges of $G$. The smallest positive integer $k$ such that $G$ is $k$-edge-choosable is called the \emph{edge choosability} of $G$, which is denoted as $ch'(G)$. The following is a well-known fact.

\begin{lem}\label{ech}
For any graph $G$, $\chi''(G)\leq ch'(G)+2$.
\end{lem}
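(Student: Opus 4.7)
My plan is to use $k+2$ colors, where $k := ch'(G)$, and build the total coloring in two stages: first a proper vertex coloring, then a list edge coloring in which each edge's list is the set of ``remaining'' colors after excluding the two colors its endpoints received. Fix the palette $\{1,2,\ldots,k+2\}$.

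The first stage is a proper vertex coloring $c : V(G) \to \{1,2,\ldots,k+2\}$. This is possible because $\chi(G) \leq \Delta(G)+1$ by the greedy bound, and $\Delta(G) \leq \chi'(G) \leq ch'(G) = k$, so $\chi(G) \leq k+1 \leq k+2$. In the second stage, for every edge $e = uv \in E(G)$, I would assign the list
\[
L_e := \{1,2,\ldots,k+2\} \setminus \{c(u), c(v)\},
\]
which has size at least $k$. Since $G$ is $k$-edge-choosable by the definition of $ch'(G)$, there exists a proper edge coloring $\phi$ with $\phi(e) \in L_e$ for every $e \in E(G)$.

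Combining $c$ and $\phi$ yields a proper total coloring of $G$ with at most $k+2$ colors: $c$ properly colors vertices, $\phi$ properly colors edges, and for every edge $e=uv$ we have $\phi(e) \notin \{c(u), c(v)\}$ by construction of $L_e$. Hence $\chi''(G) \leq k+2 = ch'(G)+2$, as required.

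There is no real obstacle in this proof; the only mild point to verify is that $\chi(G) \leq k+2$, which follows at once from the chain $\chi(G) \leq \Delta(G)+1 \leq \chi'(G)+1 \leq ch'(G)+1$. The slack of $+2$ in the palette is precisely what is needed to absorb the two forbidden endpoint colors at each edge, after which the edge-choosability assumption takes care of the rest.
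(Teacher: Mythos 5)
Your proof is correct, and it is exactly the canonical argument for this well-known fact, which the paper states as a lemma without supplying a proof. One small simplification: since $c$ is a \emph{proper} vertex coloring, $c(u)\neq c(v)$ for every edge $uv$, so each list $L_e$ has size exactly $k$ (not just ``at least $k$''), and the hypothesis of $k$-edge-choosability applies directly without any need to trim lists. The two-stage plan --- properly color vertices with the slack guaranteed by $\chi(G)\leq\Delta(G)+1\leq\chi'(G)+1\leq ch'(G)+1$, then invoke edge-choosability on the lists obtained by deleting the two endpoint colors --- is precisely the argument one would expect behind the lemma.
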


The \emph{list coloring conjecture} is as follows :
\begin{conj}[List coloring conjecture]
For any graph $G$, $ch'(G)$ $=\chi'(G)$.
\end{conj}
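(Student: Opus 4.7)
The plan is to attack the list coloring conjecture $ch'(G) = \chi'(G)$. The inequality $\chi'(G) \le ch'(G)$ is immediate: any proper edge coloring with $\chi'(G)$ colors is obtained by assigning each edge the common list $\{1, 2, \ldots, \chi'(G)\}$, so any list-coloring scheme that succeeds for arbitrary size-$\chi'(G)$ lists in particular succeeds when all lists coincide. The entire content of the conjecture therefore lies in the reverse inequality: given an assignment of lists $L_e$ of size $\chi'(G)$ to the edges of $G$, produce a proper edge coloring of $G$ that respects the lists.

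The natural first route is to imitate Galvin's kernel-method proof for bipartite multigraphs. Galvin orients the line graph $L(G)$ using a fixed optimal proper edge-coloring of $G$ together with a bipartition-based tie-breaking rule, observes that every induced subgraph has a kernel (an independent dominating set) via a stable-matching argument, and then invokes the Bondy--Boppana--Siegel theorem to conclude list colorability with lists of size $\max$-out-degree plus one. I would first re-derive Galvin's theorem inside this framework, and then attempt to extend the orientation rule to the line graph of an arbitrary graph. A candidate rule is to fix an optimal proper $\chi'(G)$-edge-coloring $\varphi$ and orient the $L(G)$-edge joining adjacent $G$-edges $e,f$ at a common endpoint $v$ towards the one whose $\varphi$-color is larger, breaking ties at the other endpoint of whichever edge is a loop in this comparison; one would then have to prove that every induced sub-line-graph retains a kernel.

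A parallel approach would go through the Alon--Tarsi theorem and the Combinatorial Nullstellensatz. The target is to show that some orientation $D$ of $L(G)$ with maximum out-degree at most $\chi'(G)-1$ has $\mathrm{EE}(D)\neq \mathrm{OE}(D)$, where $\mathrm{EE}(D)$ and $\mathrm{OE}(D)$ count even and odd Eulerian suborientations. I would try to build $D$ directly from a $\chi'(G)$-edge-coloring $\varphi$ of $G$ and then exploit the fact that each color class of $\varphi$ is a matching in $G$, which severely constrains closed Eulerian sub-structures of $L(G)$ and hopefully forces a parity imbalance. Vizing's dichotomy between Class I and Class II graphs could be used to split into cases based on whether $\chi'(G) = \Delta(G)$ or $\Delta(G)+1$.

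The main obstacle is, frankly, the entire open problem: Galvin's kernel method collapses once $G$ contains an odd cycle because the induced sub-line-graphs around odd cycles resist any consistent kernel assignment, and the Alon--Tarsi signed count has no known structural handle outside of carefully engineered special cases (complete graphs of certain orders, planar graphs of large maximum degree, graphs of bounded tree-width). The conjecture, posed independently by Vizing, Gupta, Albertson--Collins, and Bollob\'as--Harris in the 1970s, has resisted all such refinements for nearly half a century; only the asymptotic form $ch'(G) = (1+o(1))\chi'(G)$, due to Kahn via a delicate probabilistic semi-random argument, is known in full generality. A successful proof is therefore almost certain to require a genuinely new ingredient rather than an extension of any single existing technique, and any proposal along the lines sketched here should be treated as purely exploratory.
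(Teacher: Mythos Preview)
The statement you are attempting to prove is labeled in the paper as a \emph{Conjecture}, not a theorem; the paper offers no proof of it whatsoever. The List Coloring Conjecture is quoted there only so that it can be used as a \emph{hypothesis} in the subsequent theorem (``if $\mathcal{F}$ satisfies the list coloring conjecture, then Hadwiger's conjecture holds for $T(\mathcal{F})$''). There is therefore no ``paper's own proof'' against which your proposal can be compared.

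Your write-up is honest about this: you correctly observe that Galvin's kernel argument breaks down once odd cycles appear, that the Alon--Tarsi polynomial method has only been pushed through in restricted settings, and that only Kahn's asymptotic $(1+o(1))$ result is known in general. That assessment is accurate, and your closing caveat that the proposal is ``purely exploratory'' is the right disclaimer. But as a proof attempt it does not get off the ground: neither of the two routes you sketch comes with a concrete mechanism for overcoming the known obstructions, and you do not propose one. In short, there is no gap to point to beyond the fact that the problem itself is open and the paper makes no claim to have solved it.
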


\begin{thm}
Let $\mathcal{F}$ be a class of graphs closed under the operation of taking subgraphs. Hadwiger's conjecture holds for the class $T(\mathcal{F})$, if $\mathcal{F}$ satisfies the list coloring conjecture.
\end{thm}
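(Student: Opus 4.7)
The plan is to reduce this theorem directly to Theorem \ref{tcc3} by showing that the list coloring conjecture on $\mathcal{F}$ implies weak \textit{TCC} on $\mathcal{F}$. Since Theorem \ref{tcc3} already gives the desired conclusion from weak \textit{TCC} (assuming closure under subgraphs), the only real work is establishing the implication $\text{LCC} \Rightarrow \text{weak-TCC}$ on the class $\mathcal{F}$.

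First I would take an arbitrary $G \in \mathcal{F}$ and chain together three standard inequalities. By Lemma \ref{ech}, $\chi''(G) \le ch'(G) + 2$. By the list coloring conjecture applied to $G$ (which is in $\mathcal{F}$), $ch'(G) = \chi'(G)$. Finally, by Vizing's theorem (Theorem \ref{viz}), $\chi'(G) \le \Delta(G) + 1$. Composing these, we obtain
\[
\chi''(G) \;\le\; ch'(G) + 2 \;=\; \chi'(G) + 2 \;\le\; \Delta(G) + 3,
\]
which is precisely the statement of weak \textit{TCC} for $G$. Since $G$ was arbitrary in $\mathcal{F}$, weak \textit{TCC} holds throughout $\mathcal{F}$.

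At this point I would simply invoke Theorem \ref{tcc3}: $\mathcal{F}$ is closed under taking subgraphs by hypothesis, and we have just verified weak \textit{TCC} on $\mathcal{F}$, so Hadwiger's conjecture holds on $T(\mathcal{F})$, completing the proof. There is essentially no main obstacle here, since the heavy lifting has already been done in Lemma \ref{newlemma} and Theorem \ref{tcc3}; the only thing to double-check is that the hypothesis of subgraph-closure of $\mathcal{F}$ carries through correctly, which it does because Theorem \ref{tcc3} requires exactly the same hypothesis, and the application of LCC above only needs to be invoked on $G$ itself (not on its subgraphs) to derive the weak-TCC bound used by Theorem \ref{tcc3} on any subgraph, since the same chain of inequalities applies to every member of $\mathcal{F}$.
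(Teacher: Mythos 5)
Your proof is correct and follows exactly the same route as the paper: chain $\chi''(G)\le ch'(G)+2=\chi'(G)+2\le\Delta(G)+3$ via Lemma~\ref{ech}, the list coloring conjecture, and Vizing's theorem to obtain weak \textit{TCC} on $\mathcal{F}$, then invoke Theorem~\ref{tcc3}. Nothing further is needed.
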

\begin{proof}
Since $\mathcal{F}$ satisfies the list coloring conjecture, for each $G\in\mathcal{F}$, $\chi''(G)\leq ch'(G)+2=\chi'(G)+2\leq \Delta(G)+3$ (by Lemma~\ref{ech} and Theorem~\ref{viz}), that is, $\mathcal{F}$ satisfies weak \textit{TCC}. The theorem then follows from Theorem \ref{tcc3}.
\end{proof}

Thus, the validity of the list coloring conjecture would imply that Hadwiger's conjecture is true for all total graphs.

\section{Connectivity and Hadwiger's conjecture for total graphs}\label{sec:connhadwiger}
Given a partition of the vertex set of a graph, an edge of the graph is said to be a \emph{cross-edge} if its endpoints belong to different sets of the partition.

Our next result makes use of the following well-known theorem :  
\begin{thm}[Tutte; Nash-Williams; See~\cite{diestel}]\label{nw}
A multigraph contains $k$ edge-disjoint spanning trees if and only if every partition $P$ of its vertex set contains at least $k(|P| -1)$ cross-edges.
\end{thm}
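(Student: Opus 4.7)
The plan is to handle the two directions of the equivalence separately. The necessity direction is essentially a pigeonhole/contraction argument: if $G$ has $k$ edge-disjoint spanning trees $T_1,\ldots,T_k$ and $P=\{V_1,\ldots,V_p\}$ is any partition of $V(G)$, then contracting each $V_i$ to a single vertex inside a fixed $T_j$ must leave a connected multigraph on $p$ vertices, which requires at least $p-1$ edges. The surviving edges are precisely the cross-edges of $T_j$ with respect to $P$, so each $T_j$ contributes at least $|P|-1$ cross-edges; summing over the $k$ edge-disjoint trees gives at least $k(|P|-1)$ cross-edges of $G$.

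For the sufficiency direction my preferred route is Edmonds' matroid union theorem applied to $k$ copies of the cycle matroid $M(G)$. Having $k$ edge-disjoint spanning trees is equivalent to $M(G)$ containing $k$ pairwise disjoint bases, which in turn is equivalent to the union matroid $M(G)\vee\cdots\vee M(G)$ ($k$ copies) having rank $k(|V(G)|-1)$. Matroid union expresses this rank as
$$\min_{F\subseteq E(G)} \bigl(|E(G)\setminus F|+k\cdot r_{M(G)}(F)\bigr)=\min_{F\subseteq E(G)}\bigl(|E(G)\setminus F|+k(|V(G)|-c(F))\bigr),$$
where $c(F)$ is the number of connected components of the spanning subgraph $(V(G),F)$. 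Consequently, the required rank bound is equivalent to the inequality $|E(G)\setminus F|\geq k(c(F)-1)$ holding for every $F\subseteq E(G)$.

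It then remains to deduce this inequality from the partition hypothesis. Given any $F$, let $P$ be the partition of $V(G)$ into the $c(F)$ connected components of $(V(G),F)$, so $|P|=c(F)$. Every cross-edge of $P$ has its two endpoints in different components of $(V(G),F)$, hence cannot lie in $F$; therefore the cross-edges of $P$ form a subset of $E(G)\setminus F$, and
$$|E(G)\setminus F|\;\geq\;\#\{\text{cross-edges of }P\}\;\geq\;k(|P|-1)\;=\;k(c(F)-1),$$
where the middle inequality is exactly the assumed partition condition. This yields the matroid-union rank bound and therefore the desired $k$ edge-disjoint spanning trees.

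The main obstacle is of course the matroid union theorem itself, which I would treat as a black box. Without it, one would have to prove sufficiency directly via an exchange argument: start with any maximum collection of $k$ edge-disjoint forests, assume some forest is not spanning, and argue that the partition-cut condition forces an augmenting sequence of swaps along alternating paths between the forests that increases the total number of edges used, contradicting maximality. This is essentially a streamlined special case of the matroid union proof, and the bookkeeping of the alternating swaps is where the technical difficulty sits.
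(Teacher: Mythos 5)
The paper states this result as a classical theorem attributed to Tutte and Nash-Williams and simply cites Diestel's textbook for the proof; there is no in-paper proof to compare your argument against. Taken on its own terms, your proof is correct. The necessity direction, contracting each part of $P$ inside a fixed tree $T_j$ and observing that a connected multigraph on $|P|$ vertices needs at least $|P|-1$ edges, then summing over the $k$ edge-disjoint trees, is the standard argument. The sufficiency direction via Edmonds' matroid union theorem applied to $k$ copies of the cycle matroid is a legitimate and clean route: identifying, for each $F\subseteq E(G)$, the partition of $V(G)$ into the $c(F)$ components of $(V(G),F)$ and noting that every cross-edge of that partition lies outside $F$ is exactly the right bridge from the partition hypothesis to the rank inequality $|E(G)\setminus F|\geq k\bigl(c(F)-1\bigr)$. (The implicit use of connectivity of $G$ is harmless: if $G$ is disconnected and $k\geq 1$, the component partition itself violates the hypothesis, so that case never arises.) The one substantive caveat is the one you flag yourself: you have outsourced the heavy lifting to the matroid union theorem, which is of comparable depth to the target result. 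The proof in Diestel that the paper references is the direct combinatorial argument you gesture at in your last paragraph, building a maximum collection of edge-disjoint spanning forests and performing alternating exchanges; if the goal were a self-contained proof, that is the bookkeeping you would need to carry out.
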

\begin{corol}[See~\cite{diestel}]\label{nwc}
Every $2k$-edge-connected multigraph $G$ has $k$ edge-disjoint spanning trees.
\end{corol}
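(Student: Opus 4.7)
The plan is to derive Corollary~\ref{nwc} directly from Theorem~\ref{nw} by verifying the Tutte/Nash-Williams partition condition. Given a $2k$-edge-connected multigraph $G$ and an arbitrary partition $P = \{V_1, V_2, \ldots, V_p\}$ of $V(G)$ with $p = |P|$, I want to bound from below the number $m(P)$ of cross-edges of $P$ by $k(p-1)$.

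The key observation is that for each block $V_i$, the set of edges with exactly one endpoint in $V_i$ is an edge-cut separating $V_i$ from its complement. Assuming $p \geq 2$ (otherwise there is nothing to show), neither $V_i$ nor its complement is empty, so by the hypothesis that $G$ is $2k$-edge-connected, this cut has size at least $2k$. Summing this inequality over all $i \in \{1, \ldots, p\}$ double-counts each cross-edge exactly twice (once for each of its two endpoints' blocks), giving
\[
2\, m(P) \;=\; \sum_{i=1}^{p} \bigl|\partial V_i\bigr| \;\geq\; 2kp,
\]
so that $m(P) \geq kp \geq k(p-1)$. Since this holds for every partition of $V(G)$, Theorem~\ref{nw} immediately yields $k$ edge-disjoint spanning trees.

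There is really no obstacle here; the only minor care needed is the edge case $p = 1$, in which the required inequality $m(P) \geq k(p-1) = 0$ is vacuous, and the small notational point of using the $2k$-edge-connectivity hypothesis on cuts $(V_i, V(G)\setminus V_i)$ (which may involve more than one block of $P$ on the complement side). Because $2k$-edge-connectivity bounds \emph{every} such non-trivial cut by $2k$, the argument above goes through uniformly, and we even get the slightly stronger bound $m(P) \geq kp$. Thus the corollary is an immediate consequence of Theorem~\ref{nw}.
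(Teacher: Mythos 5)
Your proof is correct and is exactly the standard derivation of this corollary from the Tutte/Nash-Williams theorem (the paper does not reproduce a proof but simply cites Diestel, where this same double-counting argument appears). The bookkeeping is right: for $p\geq 2$ each $\partial V_i$ is a nontrivial edge-cut of size at least $2k$, every cross-edge is counted exactly twice in $\sum_i |\partial V_i|$, and the trivial case $p=1$ is vacuous.
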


\begin{thm}\label{th2}
Let $G$ be a $(2k-1)$-connected graph. Then, the total graph $T(G)$ contains a clique minor of size $\Delta + k$, where $\Delta$ $=\Delta(G)$.
\end{thm}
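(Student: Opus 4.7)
The plan is to extend the natural $(\Delta+1)$-clique in $T(G)$ guaranteed by Observation~\ref{ob2} by $k-1$ further branch sets, each built from a spanning tree of $G - v$. Fix a vertex $v \in V(G)$ with $d_G(v) = \Delta$, let $v_1, \ldots, v_\Delta$ be its neighbours in $G$, and let $e_i = vv_i$ for each $i \in [\Delta]$. The singletons $\{v\}, \{e_1\}, \ldots, \{e_\Delta\}$ already form a $(\Delta+1)$-clique in $T(G)$, so it suffices to exhibit $k-1$ additional connected subsets $B_1, \ldots, B_{k-1}$ of $V(T(G))$ that are pairwise disjoint, disjoint from the singletons above, pairwise adjacent in $T(G)$, and each adjacent to $v$ and to every $e_i$.

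I would then use the connectivity hypothesis in a single step. Since $G$ is $(2k-1)$-vertex-connected, $G - v$ is $(2k-2)$-vertex-connected and hence $(2k-2)$-edge-connected; Corollary~\ref{nwc} then supplies $k-1$ edge-disjoint spanning trees $T_1, \ldots, T_{k-1}$ of $G - v$. The hypothesis also forces $\Delta \ge 2k-1 \ge k-1$, so we may reindex the neighbours of $v$ so that the first $k-1$ of them are $v_1, \ldots, v_{k-1}$, and define
\[
B_j := E(T_j) \cup \{v_j\} \quad \text{for } j \in [k-1],
\]
where $E(T_j)$ is identified with the corresponding set of \textit{e}-vertices in $T(G)$.

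The rest is routine verification. Each $B_j$ is connected in $T(G)$: the \textit{e}-vertices of $E(T_j)$ induce the line graph of the tree $T_j$, which is connected since $T_j$ has at least one edge, and the \textit{v}-vertex $v_j$ is adjacent in $T(G)$ to any edge of $T_j$ incident to it. The $B_j$ are pairwise disjoint (edge-disjoint trees and distinct $v_j$) and avoid the singleton branch sets (since $E(T_j) \subseteq E(G - v)$ contains no $e_i$, and $v_j \ne v$). Finally, $B_j$ meets $\{v\}$ via $v_j$; meets $\{e_i\}$ via $v_j$ when $i = j$ and otherwise via some edge of $T_j$ incident on $v_i$; and meets $B_{j'}$ via some edge of $T_j$ incident on $v_{j'}$. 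All of the incident edges demanded by these arguments exist because $T_j$ spans $V(G - v)$. We thereby obtain $1 + \Delta + (k-1) = \Delta + k$ branch sets forming the claimed clique minor. The only non-mechanical ingredient is the invocation of Corollary~\ref{nwc}, and the connectivity budget of $2k-1$ is tuned precisely so that the corollary applies after a single vertex deletion.
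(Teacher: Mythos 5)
Your proof is correct and takes essentially the same approach as the paper: both delete a maximum-degree vertex, extract $k-1$ edge-disjoint spanning trees of the remainder via Corollary~\ref{nwc}, and use the trees' edge sets as branch sets adjacent to the $\Delta$ edges around the deleted vertex. The only cosmetic difference is in how the final `$+1$' is accounted for: the paper contracts all $v$-vertices of $G - v_\Delta$ into one branch set (leaving $v_\Delta$'s own $v$-vertex unused), whereas you keep $\{v\}$ as a singleton branch set and absorb one neighbour $v_j$ into each tree branch set $B_j$ to secure adjacency to $\{v\}$; both bookkeepings work.
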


\begin{proof}
Let $v_{\Delta}$ be a vertex of maximum degree in $G$. Let the set of \textit{e}-vertices in $T(G)$ corresponding to the edges in $E_{G}(v_{\Delta})$ be denoted by $Z$. Note that $Z$ induces a $\Delta$-clique (call it $K_{Z}$) in $T(G)$. Let $H$ be the graph obtained from $G$ by removing $v_{\Delta}$. 
We will show that there exist $k$ disjoint but pair-wise adjacent connected induced subgraphs of $T(H)$, so that by contracting these induced subgraphs we get $k$ more vertices that can be added to the existing $\Delta$-clique $K_{Z}$ thus achieving the desired $(\Delta+k)$-clique minor in $T(G)$.

Since $G$ is $(2k-1)$-connected, the vertex-connectivity of $H$ is at least $2k-2$. It follows that, edge-connectivity of $H$ is also at least $2k-2$. Hence, by Corollary \ref{nwc}, $H$ has $(k-1)$ pair-wise edge-disjoint spanning trees.

Let $T_{1},\ldots ,T_{k-1}$ be the pair-wise edge-disjoint spanning trees in $H$. Let $E_{i}'$ be the set of \textit{e}-vertices in $T(G)$ corresponding to the edges in $E(T_{i})$, for $i\in [k-1]$. Clearly the induced subgraph of $T(G)$ on $E_{i}'$ is isomorphic to the line graph of $T_{i}$ and is connected and therefore, can be contracted to a single vertex, say $y_{i}$. Let $Y:=\{y_{i} : i\in [k-1]\}$. The set of \textit{v}-vertices of $T(G)$ is also connected since they induce a subgraph isomorphic to $H$ itself, and can be contracted to a single vertex $v^{*}$. Let $F$ be the graph so obtained. Since $T_1,\ldots,T_{k-1}$ are all spanning trees of $H$, it follows that $y_iy_j\in E(F)$ for distinct $i,j\in [k-1]$ and also that $v^*y_i\in E(F)$ for all $i\in [k-1]$. Further, it can be seen that in $F$, each $y_{i}\in Y$ and $v^{*}$ are adjacent to all the \textit{e}-vertices in $Z$. Therefore, $Y\cup Z\cup \{v^{*}\}$ forms a clique of order $\Delta+k$ in $F$, implying that $T(G)$ contains a $(\Delta+k)$-clique minor.
\end{proof}

\begin{corol}
If a $(2k-1)$-connected graph $G$ satisfies $(k)$-\textit{TCC}, then Hadwiger's conjecture is true for $T(G)$.
\end{corol}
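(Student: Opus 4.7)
The plan is to combine Theorem~\ref{th2} with the hypothesis that $(k)$-\textit{TCC} holds for $G$, using Observation~\ref{ob1} as the bridge between $\chi''(G)$ and $\chi(T(G))$. The statement of Hadwiger's conjecture for $T(G)$ requires us to establish $\chi(T(G)) \le \eta(T(G))$, so the strategy is to bound $\chi(T(G))$ from above and $\eta(T(G))$ from below, and show the former does not exceed the latter.

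First, I would use the hypothesis that $G$ satisfies $(k)$-\textit{TCC} to conclude $\chi''(G) \le \Delta(G) + k$. By Observation~\ref{ob1}, this is precisely the statement that $\chi(T(G)) \le \Delta(G) + k$. Next, since $G$ is $(2k-1)$-connected by assumption, Theorem~\ref{th2} directly gives that $T(G)$ contains a clique minor of size $\Delta(G) + k$, which means $\eta(T(G)) \ge \Delta(G) + k$.

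Combining these two inequalities, we obtain
\[
\chi(T(G)) \;\le\; \Delta(G) + k \;\le\; \eta(T(G)),
\]
which is exactly Hadwiger's conjecture for the total graph $T(G)$. There is no real obstacle here: the corollary is essentially a two-line consequence of stitching together the previously established Theorem~\ref{th2} (which supplies the required clique minor) with the chromatic upper bound furnished by the $(k)$-\textit{TCC} assumption, translated from total chromatic number to chromatic number of $T(G)$ via Observation~\ref{ob1}.
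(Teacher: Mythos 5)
Your proof is correct and follows exactly the same argument as the paper: apply $(k)$-\textit{TCC} together with Observation~\ref{ob1} to get $\chi(T(G)) = \chi''(G) \le \Delta(G)+k$, then invoke Theorem~\ref{th2} to obtain a clique minor of size $\Delta(G)+k$ in $T(G)$.
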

\begin{proof}
Since $G$ satisfies $(k)$-\textit{TCC}, we have $\chi(T(G))=\chi''(G)\leq\Delta(G)+k$. As $G$ is $(2k-1)$-connected, we have from Theorem~\ref{th2} that $T(G)$ contains a clique minor of size $\Delta(G)+k$. Thus Hadwiger's Conjecture is true for $T(G)$.
\end{proof}

By Theorem~\ref{molloy}, we have that $(C)$-\textit{TCC} is true, where $C$ is the constant in Theorem~\ref{molloy}. We thus have the following corollary.
\begin{corol}\label{cth2}
If $G$ is a $(2C-1)$-connected graph, then $T(G)$ satisfies Hadwiger's conjecture, where $C$ is the constant from Theorem~\ref{molloy}. 
\end{corol}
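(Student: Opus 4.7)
}
The plan is to realize this corollary as a direct instantiation of the preceding corollary with $k = C$, where $C$ is the Molloy--Reed constant from Theorem~\ref{molloy}. First I would invoke Theorem~\ref{molloy} to observe that $(C)$-\textit{TCC} holds unconditionally for every simple finite graph: for any $G$ we have $\chi''(G) \le \Delta(G) + C$, hence in particular every graph satisfies the hypothesis ``$(k)$-\textit{TCC} with $k = C$'' that appears in the previous corollary.

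Next, given a $(2C-1)$-connected graph $G$, I would match it against Theorem~\ref{th2} with $k = C$: the connectivity hypothesis of that theorem is precisely $\kappa(G) \ge 2k - 1$, so Theorem~\ref{th2} yields a clique minor of order $\Delta(G) + C$ in $T(G)$, giving $\eta(T(G)) \ge \Delta(G) + C$. Combining with the Molloy--Reed bound and Observation~\ref{ob1}, we obtain
\[
\chi(T(G)) \;=\; \chi''(G) \;\le\; \Delta(G) + C \;\le\; \eta(T(G)),
\]
which is exactly Hadwiger's conjecture for $T(G)$. Since every step is a direct appeal to results already established in the excerpt (Theorem~\ref{molloy}, Theorem~\ref{th2}, Observation~\ref{ob1}), there is no real obstacle; the only ``work'' is to check that the two constants line up, namely that the $k$ used in Theorem~\ref{th2} to produce the minor and the $k$ used to bound $\chi''(G)$ via $(k)$-\textit{TCC} are both taken to be the same value $C$. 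The statement can equivalently be written as: ``apply the previous corollary with $k = C$ and appeal to Theorem~\ref{molloy} to discharge the hypothesis that $G$ satisfies $(C)$-\textit{TCC}.''
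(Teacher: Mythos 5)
Your proposal is correct and matches the paper's argument exactly: the paper also obtains this corollary by applying the preceding corollary with $k = C$, using Theorem~\ref{molloy} to guarantee that $(C)$-\textit{TCC} holds for all graphs and Theorem~\ref{th2} to supply the $(\Delta(G)+C)$-clique minor under the $(2C-1)$-connectivity hypothesis.
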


\section{Chromatic number and total coloring}
\begin{notation}
Given a coloring of the vertices and edges of a graph, we call a vertex $v$ or an edge $e$ which is assigned color $c$, a $c$-vertex, or a $c$-edge. Given a set of colors $C$, we call a vertex $v$ or an edge $e$ having a color from $C$ a $C$-vertex or a $C$-edge, respectively. We say that the vertex $v$ ``sees'' the color $c$, if $v$ is a $c$-vertex, or $v$ has a $c$-edge incident on it. Note that the situation that a $c$-vertex $u$ is adjacent to $v$ is not referred to by the phrase, ``$v$ sees the color $c$". For a vertex $v$, we define $S_{\varphi}(v)$ to be the set of colors that are ``seen'' by the vertex $v$ with respect to a given coloring $\varphi$.
For positive integers $i,j$ such that $i\leq j$, we denote by $[i,j]$ the set $\{i,i+1,\ldots,j\}$.
A \emph{trail} in a graph is a sequence of vertices $v_1v_2\ldots v_k$ such that for $i\in\{1,2,\ldots,k-1\}$, $v_iv_{i+1}$ is an edge of the graph, and the edges $v_1v_2,v_2v_3,\ldots,v_{k-1}v_k$ are all pairwise distinct. Note that a vertex can appear more than once on a trail. Given two subsets of edges $A$ and $B$, we say that a trail $v_1v_2\ldots v_k$ in the graph is an $(A,B)$-trail if $v_iv_{i+1}\in A$ when $i$ is odd and $v_iv_{i+1}\in B$ when $i$ is even.
\end{notation}

\begin{thm}\label{th4}
Let $G$ be a simple finite connected graph such that $\chi(G)\leq 5$. Then $\chi''(G)\leq \Delta(G)+3$.
\end{thm}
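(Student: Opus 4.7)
The plan is to extend the classical 4-colorable argument by carefully handling the extra color. Fix a proper 5-coloring $\phi:V(G)\to\{1,2,3,4,5\}$ and, by Vizing's theorem, a proper edge coloring $\psi:E(G)\to\{3,4,\ldots,\Delta+3\}$ using $\Delta+1$ colors. The combined assignment fails to be a proper total coloring only at \emph{bad} edges $e=uv$ satisfying $\psi(e)\in\{\phi(u),\phi(v)\}\cap\{3,4,5\}$. I would uncolor every bad edge and, for each one, associate the list $L(e)=\{1,2,3,4,5\}\setminus\{\phi(u),\phi(v)\}$ of size $3$: any color from $L(e)$ is safely compatible with $\phi$ and with the remaining colored edges at $u,v$, whose colors lie in $\{6,\ldots,\Delta+3\}$ and therefore never collide with $\{1,\ldots,5\}$.

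Let $B$ be the subgraph of bad edges. Then $B=M_{3}\cup M_{4}\cup M_{5}$, where $M_{c}$ is the matching of bad edges originally colored $c$, so $B$ has maximum degree at most $3$. Any cycle of $B$ using only two colors from $\{3,4,5\}$ must alternate those two colors and hence be of even length; therefore every odd cycle in $B$ necessarily uses all three of $\{3,4,5\}$. This distinguishes the present case from the 4-colorable proof, where the uncolored subgraph is automatically a disjoint union of paths and even cycles. If it happens that $B$ is bipartite, then $\chi'(B)\leq 3$ via the original palette $\{3,4,5\}$, and Galvin's theorem gives a list-edge-coloring of $B$ from the lists $L(\cdot)$ of size $3$; combining with the surviving parts of $\phi$ and $\psi$ yields a proper total coloring using at most $\Delta+3$ colors.

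The crux of the argument, and the step I expect to be the main obstacle, is the case when $B$ contains odd cycles. My plan is to modify the edge coloring $\psi$ by Kempe-like swaps along alternating trails, using the $(A,B)$-trail machinery introduced in the notation, so as to destroy odd cycles of $B$ one at a time. Given an odd cycle $C\subseteq B$, I would pick an edge $e\in C$ with $\psi(e)=c\in\{3,4,5\}$ and look for a \emph{high} color $c^{*}\in\{6,\ldots,\Delta+3\}$; since no vertex of $G$ uses a color above $5$, changing $\psi(e)$ to $c^{*}$ cannot create any vertex-edge conflict, so it would remove $e$ from $B$ and break $C$. If $c^{*}$ is not simultaneously free at both endpoints of $e$, I would swap colors along a $(\{e':\psi(e')=c\},\{e':\psi(e')=c^{*}\})$-alternating trail originating at the blocked endpoint of $e$; such a swap preserves the properness of $\psi$ (it is precisely a Vizing/Kempe edge-swap), while transferring the color $c^{*}$ to $e$. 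The technical challenge---which is the main obstacle to formalize---is to verify that a suitable trail always exists, that the swap does not introduce a new odd cycle elsewhere in $B$, and that iteration terminates. Careful bookkeeping of the sets $S_{\psi}(v)$ of edge-colors seen at each vertex, combined with a potential-function argument on the number of odd cycles of $B$ (or on $|B|$ itself), should force termination: once $B$ is bipartite, Galvin's theorem completes the list-edge-coloring and hence the total coloring.
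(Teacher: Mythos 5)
There is a small bug and a large gap.

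The small bug: you assert that after uncoloring only the \emph{bad} edges, ``the remaining colored edges at $u,v$ ... have colors in $\{6,\ldots,\Delta+3\}$ and therefore never collide with $\{1,\ldots,5\}$.'' This is false. A non-bad edge $e'=uw$ with $\psi(e')\in\{3,4,5\}\setminus\{\phi(u),\phi(w)\}$ survives with its original color in $\{3,4,5\}$, and it can clash with whatever color from $L(e)$ you give the adjacent bad edge $e=uv$. (Example: $\phi(u)=3$, $\phi(v)=1$, $\psi(uv)=3$, $\psi(uw)=4$, $\phi(w)=5$: then $uv$ is bad with $4\in L(uv)$, $uw$ is not bad and keeps color $4$.) The intended generalization of the $4$-colorable argument must uncolor \emph{all} $\{3,4,5\}$-edges, not just the bad ones; then $B=M_3\cup M_4\cup M_5$ is exactly the uncolored subgraph and the remaining edge colors really are $\geq 6$. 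With that correction the bipartite branch via Galvin's theorem is sound.

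The large gap is the step you yourself flag as ``the main obstacle,'' namely the case when $B$ has odd cycles, and the proposal does not resolve it. Moving one edge of an odd cycle to a high color may force a $(c,c^{*})$-alternating Kempe chain, and after the swap the new edge colored $c$ at the far end of the chain may itself be a conflicting edge, new odd cycles in $B$ may be created, and a potential function driving the process to termination is asserted but not exhibited. This is precisely the hard content of the theorem and is left as a sketch. The published proof avoids the bipartite/Galvin split entirely. It first carries out a cheap preprocessing step, recoloring to a coloring $\pi$ satisfying \textbf{Property A} (no $5$-vertex having incident $3$-, $4$- and $5$-edges simultaneously), which is immediate since such a vertex sees at most $\Delta-3$ colors above $5$. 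Then, rather than uncoloring, it removes conflicting $i$-edges one at a time in three phases $i=3,4,5$ using a ``color-shift'' along a carefully constrained $(\Gamma,\overline{I})$-alternating trail (Lemma~\ref{shift}). The invariants (d)--(f) of that lemma, tracking the \emph{original} colors of edges, are what make the trail well behaved and prevent new conflicts from appearing; Property~A is what kills the one dangerous configuration in the $i=5$ phase (Case~2 of the lemma, three pairwise adjacent $A_i$-edges with original colors $3,4,5$ at a $5$-vertex). Your plan correctly identifies the phenomenon (odd cycles, needing alternating-trail repairs) but does not supply these invariants or the preprocessing that makes the repair argument close; without them the termination/bookkeeping claim is unsupported and the proof is incomplete.
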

\begin{proof}
We begin by coloring $V(G)$ with colors $[5]=\{1,2,3,4,5\}$, and $E(G)$ with colors $[3,\Delta+3]=\{3,4,5,6,\ldots ,\Delta+3\}$, where $\Delta:=\Delta(G)$. Recall that such an edge coloring is possible by Theorem~\ref{viz}. Note that this coloring of $V(G)\cup E(G)$, which we shall denote by $\alpha$, need not be a total coloring. 

Clearly, in this coloring, no two adjacent vertices have the same color, and no two adjacent edges have the same color. But there could exist edges of the form $e=uv$ such that $\alpha(e)=\alpha(v)$ or $\alpha(e)=\alpha(u)$. Note that if that happens, then $\alpha(e)\in\{3,4,5\}$. We shall call such an edge a ``conflicting edge''.

We now create another coloring $\pi:V(G)\cup E(G)\rightarrow [\Delta+3]$, by slightly modifying the coloring $\alpha$ if necessary, so that the following property holds for $\pi$.
\medskip

\textbf{Property A.} There is no 5-vertex $v$ having a 3-edge, 4-edge and 5-edge incident on it.
\medskip

The coloring $\alpha$ can be transformed into such a coloring $\pi$ by repeatedly doing the following: Consider each 5-vertex $v$ having a 3-edge, 4-edge and 5-edge incident on it. Since $v$ can now see at most $\Delta-3$ colors from $[6,\Delta+3]$, there exists a color $c\in [6,\Delta+3]$ that is not seen by $v$. We assign the color $c$ to $v$. Note that no neighbours of $v$ have the color $c$ since the 5-vertices of $\alpha$ formed an independent set and the color $c$ was not assigned to any vertex in $\alpha$.

We call this coloring $\pi$ the ``original coloring'' of $G$. 
We emphasize that:
\begin{enumerate}
\renewcommand{\labelenumi}{(\roman{enumi})}
\item $\left.\pi\right|_{V(G)}$ and $\left.\pi\right|_{E(G)}$ are proper colorings of $V(G)$ and $E(G)$, respectively. 
\item $\pi$ satisfies Property~A.
\item Any conflicting edge in $\pi$ is also a conflicting edge in $\alpha$. Thus, if $e$ is a conflicting edge in $\pi$, then $\pi(e)\in\{3,4,5\}$. 
\end{enumerate}

Our goal now is to start with the coloring $\pi$ of $G$ and then recolor each conflicting edge so that it is no longer a conflicting edge in the resulting coloring. We will do so in three \textit{phases} and in each phase, we select a color $i\in\{3,4,5\}$ and recolor the conflicting $i$-edges one by one. Our recoloring strategy will rely on the following lemma.
\begin{lem}\label{shift}
Let $\varphi$ be a coloring of $V(G)\cup E(G)$ at a certain stage of the recoloring process. Let $i\in\{3,4,5\}$ and let $A_{i}=\{1,2,\ldots,i-1\}$. Suppose $\varphi$ satisfies the following properties:
\begin{enumerate}
\renewcommand{\labelenumi}{(\alph{enumi})}
\renewcommand{\theenumi}{(\alph{enumi})}
\item\label{p:vcol} $\left.\varphi\right|_{V(G)}=\left.\pi\right|_{V(G)}$,
\item $\left.\varphi\right|_{E(G)}$ is a proper edge coloring of $G$,
\item there are no conflicting $A_{i}$-edges in $G$,
\item every $A_{i}$-edge in $G$ has original color in $\{3,4,\ldots,i\}$,
\item\label{p:orgconf} every $A_{i}$-edge in $G$ that has original color $i$ has an endpoint colored $i$, that is, it was originally a conflicting $i$-edge, and
\item\label{p:orgcol} if $e'$ is a conflicting $i$-edge in $G$, then it has original color $i$, that is, $\pi(e')=\varphi(e')=i$.
\end{enumerate}
Further, let $e$ be a conflicting $i$-edge in $G$. Then $G$ can be recolored to a coloring $\psi$ so that $e$ is no longer a conflicting edge in $\psi$, and the conditions \ref{p:vcol} -- \ref{p:orgcol} hold for this new coloring $\psi$ too.
\end{lem}
\begin{proof}
Let $e=v_{0}v_{1}$, where $\varphi(e)=\varphi(v_1)=i$. By~\ref{p:orgcol}, we know that the original color of $e$ is $i$.

If there exists a color in $A_{i}\setminus (S_{\varphi}(v_{0})\cup S_{\varphi}(v_{1}))$, we can recolor $e$ with that color to get $\psi$. It is easy to verify that all the properties \ref{p:vcol} -- \ref{p:orgcol} hold for $\psi$.
 
So we shall assume that $A_{i}\subseteq S_{\varphi}(v_{0})\cup S_{\varphi}(v_{1})$. Let $X$ be the set of edges incident on $v_0$ that have original color in $[3,i]$. Since $\lvert [3,i]\rvert=i-2$ and $\pi|_{E(G)}$ is a proper edge coloring, we have $|X|\leq i-2$. Since $v_0v_1$ has original color $i$, we have $v_0v_1\in X$, which implies that $|X\setminus\{v_0v_1\}|\leq i-3$. Since $\varphi(v_{0}v_{1})=i$, it is not an $A_{i}$-edge. Thus there are at most $i-3$ $A_i$-edges in $X$. Since every $A_{i}$-edge has original color in $[3,i]$ by (d), we know that every $A_i$-edge incident on $v_0$ is in $X$. It follows that there are at most $i-3$ $A_{i}$-edges incident on $v_{0}$. Considering the possibility that $\varphi(v_{0})$ also may be in $A_{i}$, we have $|S_{\varphi}(v_{0})\cap A_{i}|\leq i-2$. This implies that there exists a color $\gamma\in A_{i}\setminus S_{\varphi}(v_{0})$. As $A_{i}\subseteq S_{\varphi}(v_{0})\cup S_{\varphi}(v_{1})$, we have $\gamma\in S_{\varphi}(v_{1})$. Therefore, as $\varphi(v_1)=i$, there exists an edge $v_{1}v_{2}$ such that $\varphi(v_{1}v_{2})=\gamma$. As $\gamma\notin S_\varphi(v_0)$, we have $v_{1}v_{2}\neq v_{0}v_{1}$.

We call an edge $f$ in $G$ an $\overline{i}$-edge if $\varphi(f)\in A_{i}\setminus\{\gamma\}$ and the original color of $f$ is $i$. By~\ref{p:orgconf}, we know that one of the endpoints of every $\overline{i}$-edge is colored $i$. Let $\Gamma$ and $\overline{I}$ be the set of all $\gamma$- and $\overline{i}$-edges in $G$, respectively. 

Now consider a maximal $(\Gamma,\overline{I})$-trail $P=v_1v_2\ldots v_k$, where $k\geq 2$, starting with the edge $v_{1}v_{2}$. It is easy to see that every edge on $P$ is an $A_{i}$-edge. Then we have the following:
\medskip

\noindent (1)\label{1} For each odd $t\in\{1,2,\ldots,k\}$, $\varphi(v_t)=i$.

Note that $\varphi(v_{1})=i$. Suppose that for some odd $t\in\{1,2,\ldots,k-2\}$, we have $\varphi(v_{t})=i$. Recall that $\left.\varphi\right|_{V(G)}=\left.\pi\right|_{V(G)}$ and that $\left.\pi\right|_{V(G)}$ is a proper vertex coloring of $G$. Then clearly $\varphi(v_{t+1})\neq i$, and since $v_{t+1}v_{t+2}$ is an $\overline{i}$-edge and an endpoint of every $\overline{i}$-edge must be colored $i$, we have $\varphi(v_{t+2})=i$.
\medskip

\noindent (2) $v_{0}P$ is a path.

Otherwise, some vertex must be repeated while traversing $v_{0}P$. Let $v$ be the first such vertex. If $v\neq v_{0}$, then it either has two edges having original color $i$ or two $\gamma$-edges incident on it; as both $\left.\varphi\right|_{E(G)}$ and $\left.\pi\right|_{E(G)}$ are proper edge colorings of $G$, this is a contradiction. Now, let $v=v_{0}$. Recall that by choice of $\gamma$, the vertex $v_{0}$ has no incident $\gamma$-edge. Then, since $v_{0}v_{1}$ has original color $i$, there must be two incident edges with original color $i$ at $v_{0}$; another contradiction. It follows that $v_{0}P$ is a path.
\medskip

Note that the last vertex $v_{k}$ on $P$ satisfies the following properties:
\begin{itemize}
    \item[(3a)] If $v_{k-1}v_{k}$ is an $\overline{i}$-edge, $\gamma \notin S_{\varphi}(v_{k})$.
    \item[(3b)] If $v_{k-1}v_{k}$ is a $\gamma$-edge, then there exists $c'\in A_{i}\setminus S_{\varphi}(v_{k})$. (Note that $c'\neq \gamma$.)
\end{itemize}
To see this, note that in the first case, we have that $k$ is odd and then by (1), $\varphi(v_{k})=i\neq\gamma$. By the maximality of $P$, we have that $v_{k}$ has no incident $\gamma$-edges. In the second case, suppose $A_{i}\subseteq S_{\varphi}(v_{k})$. This happens only if, for each $c\in A_{i}\setminus\{\varphi(v_{k})\}$, $v_{k}$ has an $c$-edge incident on it. By property (d) and the fact that $\left.\pi\right|_{E(G)}$ is a proper edge coloring, these $i-2$ edges have a distinct original color from the set $[3,i]$. Since $|[3,i]|=i-2$, we can conclude that $v_{k}$ has an incident edge $v_{k}w$ with original color $i$ and $\varphi(v_{k}w)\in A_{i}$. Is it possible that $w=v_{k-1}$? Recall that in this case, $k-1$ is odd and $v_{k-2}v_{k-1}$ is an $i$-edge (when $k=2$) or an $\overline{i}$-edge (when $k>2$). As $\left.\pi\right|_{E(G)}$ is a proper edge-coloring, $v_{k-1}v_{k}$ cannot have original color $i$ and so $w\neq v_{k-1}$. As $\varphi(v_{k-1}v_k)=\gamma$, we have $\varphi(v_kw)\neq\gamma$, and so $v_kw$ is an $\overline{i}$-edge. This means $v_{k}$ has an incident $\overline{i}$-edge outside of $P$. This violates the maximality of the trail $P$. So there exists a color $c'\in A_{i}\setminus S_{\varphi}(v_{k})$.
\medskip

Now we recolor the edges on $v_{0}P$ by the following color shift strategy : 
\begin{itemize}
    \item  For each $1\leq t\leq k-1$, assign $\varphi(v_{t}v_{t+1})$ to the preceding edge $v_{t-1}v_{t}$ by setting $\psi(v_{t-1}v_{t}):=$ $\varphi(v_{t}v_{t+1})$.
    
    \item If the last edge $v_{k-1}v_{k}$ on $P$ is an $\overline{i}$-edge, set $\psi(v_{k-1}v_{k}):=$ $\gamma$.
    
    \item If $v_{k-1}v_{k}$ is a $\gamma$-edge, pick a color $c'\in A_{i}\setminus S_{\varphi}(v_{k})$ and set $\psi(v_{k-1}v_{k}):=c'$. Such a $c'$ exists by (3b). 
\end{itemize}

Finally, set $\psi(v):=\varphi(v)$, for each vertex $v\in V(G)$, and $\psi(e):=\varphi(e)$, for each edge $e\in E(G)\setminus E(v_{0}P)$.

It is easy to see that properties (a), (d), (e), and (f) hold for this new coloring $\psi$: We have not recolored any vertex, therefore (a) holds; Other than $v_0v_1$, all the edges that we have recolored are $A_{i}$-edges (with respect to $\varphi$) and they remain $A_{i}$-edges with respect to $\psi$ as well; The only new $A_{i}$-edge with respect to $\psi$ is $e=v_{0}v_{1}$. This is consistent with (d) and (e); the set of conflicting $i$-edges with respect to $\psi$ is the set of conflicting $i$-edges with respect to $\varphi$ minus $v_{0}v_{1}$, therefore (f) holds. We shall now proceed to show that the color shift strategy produces no new conflicts, ensuring that $\psi$ satisfies properties (b) and (c) also.

We first show that for any $t\in\{1,2,\ldots,k-1\}$, we have $\psi(v_{t-1}v_t)\neq\psi(v_tv_{t+1})$. For $t\leq k-2$, we know that $\psi(v_{t-1}v_t)=\varphi(v_tv_{t+1})$, $\psi(v_tv_{t+1})=\varphi(v_{t+1}v_{t+2})$, and that $\varphi(v_tv_{t+1})\neq\varphi(v_{t+1}v_{t+2})$ (as $\varphi$ satisfies (b)). It follows that $\psi(v_{t-1}v_t)\neq\psi(v_tv_{t+1})$. Next, if $t=k-1$, then $\psi(v_{k-2}v_{k-1})=\varphi(v_{k-1}v_k)$ and by (3a) and (3b), $\psi(v_{k-1}v_k)\notin S_\varphi(v_k)$. Since $\varphi(v_{k-1}v_k)\in S_\varphi(v_k)$, this means that $\psi(v_{k-2}v_{k-1})\neq\psi(v_{k-1}v_k)$. Thus in $\psi$, no two consecutive edges of $v_0P$ have the same color.

Since we are recoloring only the edges on $v_{0}P$, if (b) or (c) is violated in $\psi$, there must be an edge on $v_{0}P$ such that it conflicts either with one of its end-points or with an adjacent edge. Let $t$ be any index such that $v_{t}v_{t+1}$ is such an edge with respect to $\psi$. Let $c=\psi(v_{t}v_{t+1})$. We will say that the edge $v_{t}v_{t+1}$ has a conflict at the end-point $v_{t+1}$, if either $\psi(v_{t+1})=c$ or $v_{t+1}$ has an incident edge $v_{t+1}u\neq v_tv_{t+1}$ with $\psi(v_{t+1}u)=c$. Otherwise, the conflict has to be at the end-point $v_{t}$.

We claim that $v_{t}v_{t+1}$ cannot have a conflict at the end-point $v_{t+1}$. If $t=k-1$, this is obvious from (3a) and (3b). For $t\leq k-2$, $\psi(v_{t}v_{t+1})=\varphi(v_{t+1}v_{t+2})=c$. Suppose that $\psi(v_{t+1})=c$. Then since we are recoloring only edges, we have $\varphi(v_{t+1})=c$, which contradicts the fact that $\varphi$ satisfies (c). Next, suppose that there exists an edge $v_{t+1}u\neq v_tv_{t+1}$ such that $\psi(v_{t+1}u)=c$. Since no two consecutive edges of $v_0P$ have the same color in $\psi$, we know that $u\neq v_{t+2}$. Then $v_{t+1}u$ is not an edge of $v_0P$ which means that $\varphi(v_{t+1}u)=\psi(v_{t+1}u)=c$, which implies that there are two edges with color $c$ incident at $v_{t+1}$ in $\varphi$. This contradicts the fact that $\varphi$ satisfied (b). Therefore, in $\psi$, the edge $v_tv_{t+1}$ cannot have a conflict at the end-point $v_{t+1}$.

Hence, $v_{t}v_{t+1}$ has a conflict at the end-point $v_{t}$. We consider the two possible cases separately.
\medskip

\noindent\textbf{Case 1:} $\psi(v_{t}v_{t+1})=\gamma$.
\medskip

In this case the edge $v_{t}v_{t+1}$ is an $\overline{i}$-edge (if $t>0$) or $i$-edge (if $t=0$) before the recoloring. Note that this means that $t+1$ is odd. If $t=0$, then by the choice of $\gamma$, we have $\gamma\notin S_{\varphi}(v_{0})$ and thus the edge $v_0v_1$ has no conflict at the end-point $v_{0}$ in $\psi$. For $t\geq 2$, since $\varphi(v_{t-1}v_{t})=\gamma$, and $\varphi$ satisfied (c) and (b), we know that $\psi(v_t)=\varphi(v_t)\neq\gamma$ and that no edge incident at $v_t$ other than $v_{t-1}v_t,v_tv_{t+1}$ has the color $\gamma$ in $\psi$. Since in $\psi$, no two consecutive edges on $v_0P$ have the same color, $\psi(v_{t-1}v_t)\neq\gamma$, and so we can conclude that $v_tv_{t+1}$ has no conflict at the end-point $v_t$ in $\psi$ in this case.
\medskip

\noindent\textbf{Case 2:} $\psi(v_tv_{t+1})\neq\gamma$.
\medskip

In this case, $(\varphi(v_{t+1}v_{t+2})=\psi(v_tv_{t+1}))\neq\gamma$, which implies that $v_{t+1}v_{t+2}$ is an $\overline{i}$-edge before the recoloring. Then $t$ is odd and $\varphi(v_{t}v_{t+1})=\gamma$.
By (1), $\psi(v_{t})=\varphi(v_{t})=i$. Since $\psi(v_tv_{t+1})\in A_i$ and $v_tv_{t+1}$ has a conflict at the end-point $v_{t}$, this implies that there exists some edge $v_{t}x\neq v_tv_{t+1}$ such that $\psi(v_tx)=\psi(v_tv_{t+1})$. As no two consecutive edges on $v_0P$ have the same color in $\psi$, we have that $x\neq v_{t-1}$. Then $\varphi(v_tx)=\psi(v_tx)=\psi(v_tv_{t+1})$. Thus, the three edges $v_{t-1}v_{t}$, $v_{t}x$, and $v_{t}v_{t+1}$ are all $A_{i}$-edges with respect to $\varphi$, and by property (d) have original colors in $[3,i]$. Since $\left.\pi\right|_{E(G)}$ is a proper edge-coloring, all these three edges should have distinct colors with respect to $\pi$. This can happen only if $\lvert [3,i]\rvert\geq 3$, which happens only if $i=5$. Then with respect to $\pi$, $v_{t}$ is a $5$-vertex with a $3$-edge, a $4$-edge, and a $5$-edge all incident on it. This contradicts the fact that $\pi$ satisfies Property~A.
\medskip

Thus, we conclude that in $\psi$, no edge on $v_0P$ has conflict at either of its end-points, which also implies that $e=v_0v_1$ is not a conflicting edge in $\psi$. Thus $\psi$ satisfies (b), and since $e$ is the only new $A_{i}$-edge in $\psi$, property (c) holds for $\psi$. This completes the proof.
\end{proof}

With the help of the above lemma, we can start with the original coloring $\pi$ and proceed with the recoloring in three \textit{phases}. In each phase, we recolor all the conflicting $i$-edges by invoking Lemma \ref{shift} first with $i=3$ in phase I, then $i=4$ in phase II, and finally $i=5$ in phase III. Note that after the phase corresponding to $i$, there are no conflicting $i$-edges in $G$. Therefore, after the three phases there are no conflicts in $G$. Thus, we have a total coloring of $G$ using colors from $\{1,2,3,\ldots,\Delta+3\}$ implying that, $\chi''(G)\leq \Delta+3$.
\end{proof}

\section{Open problems}

\begin{enumerate}
    \item Corollary \ref{cth2} states that Hadwiger's conjecture is true for graphs with high vertex-connectivity, that is, at least $C'$ where $C'$ is a constant.
    
    It is interesting to see whether a similar statement can be made with respect to edge-connectivity instead of vertex-connectivity. Note that Theorem \ref{nw} uses only edge-connectivity. Hence, it would be natural to expect Corollary \ref{cth2} to be expressed in terms of edge-connectivity. Unfortunately, we could not find a way to do it.
    
    Of course, it would be most desirable to prove Hadwiger's conjecture for all total graphs dropping any constraints. 
    
    \item In Theorem \ref{tcc3}, we show that if  weak \textit{TCC} holds for all graphs, then Hadwiger's conjecture is true all total graphs. However, improving this result to get the following remains challenging.
    \begin{center}
    ``If $(k)$-\textit{TCC} is true for all graphs,  then Hadwiger's conjecture is true for all total graphs.''
    \end{center}
    for some fixed positive integer $k>3$. The easiest case to try out is when $k=4$ and we think that this case itself would be interesting for it may require a new idea compared to the one used for proving the $k=3$ case.
    \item Since we could not prove Hadwiger's conjecture for all total graphs, it is interesting to try to prove Hadwiger's conjecture for total graphs of specialized graph classes for which weak \textit{TCC} is not yet known to hold.
    \item It remains hard to prove weak \textit{TCC} for the general case. In Theorem \ref{th4}, we proved it for $5$-colorable graphs. The current best upper bound on the total chromatic number of $k$-colorable graphs when $6\leq k\leq 8$ is $\Delta+4$. Proving weak \textit{TCC} for $k$-colorable graphs even for small values of $k$, say $k=6,7,\ldots$, remains open.
    \item In the proof of Theorem \ref{th4}, $\chi'(G)$ colors are used to do the initial proper edge coloring and an additional two colors are used to get a proper vertex coloring. This suggests the question whether it is possible, when $\chi'(G)=\Delta(G)$ (that is, when $G$ is \textit{class I}), to obtain the bound $\chi''(G)\leq\Delta(G)+2$, instead of the $\Delta(G)+3$ bound that we proved in Theorem \ref{th4}. Unfortunately, our proof fails to achieve this since even when dealing with \textit{class I} graphs, we need one extra color, the $(\Delta+3)$-th color, to establish the Property~A for the original coloring. Property~A is crucial in phase III of the recoloring process, that is, when conflicting $5$-edges are recolored (see proof of Lemma \ref{shift}). 
    
    It would be interesting to prove, $\chi''(G)\leq \Delta(G)+2$ when $G$ is a \textit{class I} $5$-colorable graph, possibly by some clever tweaking of the proof of Theorem \ref{th4}. 
\end{enumerate}

\section* {Acknowledgements}

The first author was partially supported by the fixed grant scheme
 SERB-MATRICS project number MTR/2019/000790.
The second author was partially supported by the fixed grant scheme
 SERB-MATRICS project number MTR/2018/000600.

\bibliographystyle{unsrt}

\end{document}